\newtheorem{theorem}{Theorem}
\newtheorem{lemma}[theorem]{Lemma}
\newtheorem{remark}[theorem]{Remark}
\newtheorem{corollary}[theorem]{Corollary}
\newtheorem{conjecture}[theorem]{Conjecture}
\newtheorem{question}[theorem]{Question}
\begin{document}

\begin{frontmatter}

\title{On the Chromatic Number of Stable Kneser Hypergraphs: Verifying the Conjecture for New Families}

\author[1]{Hamid Reza Daneshpajouh}
\cortext[cor1]{Corresponding author}
\ead{Hamid-Reza.Daneshpajouh@nottingham.edu.cn}

\address[1]{School of Mathematical Sciences, University of Nottingham Ningbo China, 199 Taikang East Road, Ningbo, 315100, China}

\begin{abstract}
One of the key unsolved conjectures in hypergraph coloring is about the chromatic number of $s$-stable $r$-uniform Kneser hypergraphs $\mathrm{KG}^r(n,k)_{s\textup{-stab}}$. The problem remains largely open, particularly in the case where $s > r\geq 3$. To the best of our knowledge, no information is available except a limited number of computations conducted for the instances when $r=3, 4$, $s=4, 5$, $k=2,3$ with some $n$ does not exceed 14. In this study, we verify the conjecture for infinity many values of the parameters $n$ and $k$. In particular, we demonstrate: (i) the validity of the conjecture for $r = 4$, $s = 6$ under the condition that $3 \mid n$ or $k=2$, and (ii) for $r = 4$, $k = 2$, $s = 5$ given $3 \nmid n$. As far as we are aware, this provides the first rigorous theoretical proof of the conjecture (for the case $s > r\geq 3$) for infinitely many parameter values, extending beyond finite computational verification. Furthermore, our methods rely on a detailed study of vector-stable Kneser graphs, an approach that not only yields these results but also provides a deeper understanding of their chromatic numbers.
\end{abstract}

\begin{keyword}
Vector-stable Kneser graph, stable Kneser hypergraph, chromatic number, Tucker's lemma, hypergraph coloring.
\end{keyword}

\end{frontmatter}

\section{Introduction}

Throughout this note, the symbol $[n]$ is used for the set $\{1,\ldots, n\}$. A subset $X \subseteq [n]$ is called \textit{$s$-stable} if $s\leq |i-j|\leq n-s$ for all distinct $i, j\in X$. The $r$-uniform Kneser hypergraph $\mathrm{KG}^{r}(n,k)$ is a hypergraph whose vertices are all $k$-subsets of $[n]$ and whose hyperedges are all sets $\{A_1, \ldots , A_r\}$ of $r$ vertices where $A_i\cap A_j=\emptyset$ for $i\neq j$. The $r$-uniform $s$-stable Kneser hypergraph $\mathrm{KG}^{r}(n,k)_{s\textup{-stab}}$ is an induced hypergraph of $\mathrm{KG}^{r}(n,k)$ whose vertices are all $s$-stable $k$-subsets of $[n]$. 

In 1978, Lov\'{a}sz~\cite{Lo78} established that
\[
\chi\left(\mathrm{KG}^2(n,k)\right)=n-2(k-1)\quad \text{for all } n\geq 2k.
\] 
Shortly afterward, Schrijver~\cite{Sch78} realized that the chromatic number remains constant even after removal of all non 2-stable vertices, i.e.,
\[
\chi\left(\mathrm{KG}^2(n,k)_{2\textup{-stab}}\right)=n-2(k-1)\quad \text{for all } n\geq 2k.
\]
Expanding on this foundation, Alon--Frankl--Lov\'{a}sz~\cite{alon1986chromatic} found that the results of Lov\'{a}sz can be generalized to $r$-uniform Kneser hypergraphs:
\[
\chi \left(\mathrm{KG}^{r}(n,k)\right)=\left\lceil\frac{n-r(k-1)}{r-1}\right\rceil\quad\text{for all } n\geq rk \text{ and } r\geq 2.
\]
In light of these results, Ziegler~\cite{ziegler2002generalized} conjectured that the results of Alon--Frankl--Lov\'{a}sz remain valid even with the elimination of all non $r$-stable vertices.

\begin{conjecture}[\cite{ziegler2002generalized}]\label{Conj: zig}
If $r\geq 2$ and $n\geq rk$, then
\[
\chi\left(\mathrm{KG}^r(n,k)_{r\textup{-stab}}\right)=\left\lceil\frac{n-r(k-1)}{r-1}\right\rceil.
\]
\end{conjecture}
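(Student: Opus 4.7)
The plan is to try to match the known upper bound with a topological lower bound. The upper bound $\chi\bigl(\mathrm{KG}^r(n,k)_{r\text{-stab}}\bigr)\le \lceil (n-r(k-1))/(r-1)\rceil$ should come essentially for free from the coloring that works for the full Kneser hypergraph $\mathrm{KG}^r(n,k)$: cut $[n-r(k-1)+r-1]$ into blocks of length $r-1$, color every $k$-subset $A$ by the block containing $\min A$ whenever $\min A$ is small enough, and dump the remaining vertices into one extra color. Since $r$-stable sets are in particular arbitrary $k$-subsets, this bound is inherited, and any $r$ pairwise-disjoint $r$-stable sets sharing a color would violate the pigeonhole count in the usual way. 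So the real content is the lower bound.

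For the lower bound I would proceed by contradiction and in the style of Alon--Frankl--Lov\'asz and Ziegler: assume a coloring with fewer than $\lceil (n-r(k-1))/(r-1)\rceil$ colors, and use it to define a $\mathbb{Z}_r$-equivariant map from a suitable sphere (or the deleted join associated to the stable subcomplex) into a space of too-small dimension, then invoke an equivariant version of Tucker's lemma or the $\mathbb{Z}_r$-index/Dold theorem to derive a contradiction. Concretely, I would look at the Schrijver-type configuration space built from $r$-stable $k$-subsets of $[n]$ and try to show its $\mathbb{Z}_r$-index is at least $n-r(k-1)-1$. For $r=2$ this is exactly Schrijver's theorem, where a clever inductive Tucker argument using the cyclic symmetry of $[n]$ suffices; the hope is to push the same strategy through for all $r$.

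The main obstacle, and the reason the conjecture remains open for $r\ge 3$, is precisely this last step: there is no known analogue of Schrijver's combinatorial/topological rigidity argument in the higher-uniformity setting, because the deleted $r$-fold join of the stable complex does not have a transparent equivariant connectivity. To make any realistic progress I would therefore not attack $r$-stability directly but \emph{relax} it to a vector-stability condition, imposing different separation thresholds on different pairs of coordinates so that the resulting complex becomes topologically tractable (for instance, a shifted simplex or a chessboard-like complex whose connectivity can be read off), and then use a monotonicity argument to transfer chromatic lower bounds from the vector-stable hypergraph to $\mathrm{KG}^r(n,k)_{r\text{-stab}}$. I would expect this machinery to succeed only under arithmetic conditions on $n$ (divisibility by $r-1$ or small $k$), which matches the regime the paper announces; a parameter-free proof of the full conjecture would almost certainly require a genuinely new equivariant-topological input beyond Tucker's lemma.
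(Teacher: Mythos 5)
The statement you were asked to prove is Conjecture~\ref{Conj: zig} --- an open conjecture due to Ziegler --- and the paper does not prove it. The paper only establishes special cases of the more general Conjecture~\ref{Conj: Meu} under arithmetic side conditions (Theorems~\ref{Thm:main1}, \ref{Thm:VectorStable}, Corollaries~\ref{corollary:1}, \ref{coro:2}), and it explicitly cites the known partial results (Schrijver for $r=2$, Alon--Drewnowski--{\L}uczak for $r$ a power of $2$) while leaving the full conjecture open. So there is no proof in the paper to compare against, and your write-up, which is a strategy sketch rather than a proof, correctly recognizes that.

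That said, what you wrote is a reasonable and honest assessment of the state of the art, and it aligns well with what the paper actually does. Your upper-bound paragraph is correct and is exactly the folklore coloring used in Lemma~\ref{lemma: coloring}. Your lower-bound discussion correctly identifies the obstruction for $r \ge 3$ --- no Schrijver-style equivariant rigidity is available for the deleted $r$-fold join of the stable complex --- and your proposed workaround, relaxing to vector-stability so the complex becomes tractable and then transferring lower bounds by monotonicity under arithmetic constraints on $n$, is precisely the mechanism of Lemma~\ref{Lemma: Main} and Theorem~\ref{Thm:main1} in this paper. The one thing to flag: your opening says ``The plan is to try to match the known upper bound with a topological lower bound'' as though a proof will follow, but the text that follows is (correctly) an analysis of why this cannot currently be carried out in full generality; you should not present this as a proof of the conjecture, because it isn't one and the conjecture remains open.
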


Furthermore, Meunier~\cite{Me11} expanded this conjecture to $s$-stable $r$-uniform Kneser hypergraphs.

\begin{conjecture}[\cite{Me11}]\label{Conj: Meu}\footnote{It is worth noting that there was an alternative generalization of Ziegler's conjecture~\cite{frick2020chromatic}, which was ultimately found to be incorrect~\cite{daneshpajouh2023counterexample}.}
If $r, s\geq 2$ and $n\geq \max\{r,s\}k$, then
\[
\chi \left(\mathrm{KG}^r(n,k)_{s\textup{-stab}}\right)=\left\lceil\frac{n-\max\{r,s\}(k-1)}{r-1}\right\rceil.
\]
\end{conjecture}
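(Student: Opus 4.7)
Since the conjecture is still open in general, I will lay out a plan aimed at the regimes in this paper's abstract --- namely $(r,s)=(4,6)$ with $3 \mid n$ or $k = 2$, and $(r,k,s)=(4,2,5)$ with $3 \nmid n$ --- and indicate where the general case stalls.

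The upper bound $\chi(\mathrm{KG}^r(n,k)_{s\textup{-stab}}) \leq \lceil (n-\max\{r,s\}(k-1))/(r-1) \rceil$ is the routine half. For $s \leq r$ it is inherited from the Alon--Frankl--Lov\'asz coloring of the full Kneser hypergraph. For $s > r$, one partitions a suitable suffix of $[n]$ into consecutive blocks of length $r-1$, colors each $s$-stable $k$-subset by the block containing its smallest element (with the leading elements absorbed into the first color), and verifies that $r$ pairwise disjoint $s$-stable $k$-subsets cannot all have minima in a single block because of the stronger spacing condition. The work lies in the matching lower bound.

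For the lower bound my plan is to pass to \emph{vector-stable Kneser graphs}: rather than imposing the uniform gap $|i-j| \geq s$, one prescribes a gap-vector $(g_1,\ldots,g_{k-1})$ between successive elements of a $k$-subset. Such graphs enjoy more symmetry than the underlying $s$-stable hypergraph and are amenable to topological lower bounds through Tucker's lemma, or its $\mathbb{Z}_p$-equivariant extension when a free cyclic action of prime order is available. The argument then splits into two steps: (a) a sharp chromatic lower bound for a carefully chosen vector-stable Kneser graph via a Tucker-type theorem, and (b) a transfer of that bound to $\mathrm{KG}^r(n,k)_{s\textup{-stab}}$, by showing that any hypothetical coloring of the hypergraph with too few colors induces a proper coloring of the auxiliary vector-stable graph.

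I expect step (b) to be the crux. The passage from an $r$-uniform hypergraph coloring to a graph coloring loses the hyperedge structure, so one must choose the gap-vector so that forbidding a rainbow hyperedge on $r$ pairwise disjoint $s$-stable subsets forces a monochromatic edge in the vector-stable graph, and the arithmetic has to align exactly with $\lceil (n-\max\{r,s\}(k-1))/(r-1) \rceil$. The divisibility conditions on $n$ modulo $3$ in the abstract correspond precisely to the existence of a free $\mathbb{Z}_3$-action (with $r-1=3$ prime), which is what makes a $\mathbb{Z}_3$-Tucker lemma bite with matching constant; the case $k=2$ is separately tractable because the vector-stable graph collapses to a single-gap problem. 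Pushing beyond those residue classes, or beyond $r=4$, would require either a new symmetry-breaking trick or a Borsuk--Ulam variant unburdened by primality, which is why the conjecture remains open in its full generality.
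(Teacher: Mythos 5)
The statement you are addressing is an open conjecture; the paper itself does not (and cannot) prove it in general, so your ``plan'' framing is appropriate. You correctly flag vector-stable Kneser graphs as the central tool and correctly intuit that a Tucker-type topological bound is where the graph-level lower bounds come from. But two of your structural guesses are off in ways that matter.

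First, the transfer step (b) in the paper is not a reduction from a bad hypergraph coloring to a graph coloring of the auxiliary vector-stable graph. The actual engine is a purely combinatorial \emph{product lemma} (Lemma~\ref{Lemma: Main}): it assumes Ziegler's conjecture for $r_1$ (known for $r_1$ a power of $2$) and a vector-stable lower bound for $r_2$ (taken to be $r_2=2$ via Theorem~\ref{Theorem: Main of Hamid \& Jozsef}), and concludes the lower bound for $r=r_1 r_2$ with the dilated stability vector $r_1\vec{s}$. The mechanism is: from a hypothetical proper $t$-coloring of $\mathrm{KG}^{r}(n,k')_{\vec{s}\text{-stab}}$, build a $t$-coloring of $\mathrm{KG}^{r_1}(n,m)_{r_1\text{-stab}}$ by extracting, inside each $m$-set, an $r_2$-tuple of disjoint $\vec{s}$-stable subsets that are forced to be monochromatic; then invoke Ziegler for $r_1$ to find $r_1$ disjoint monochromatic $m$-sets; the union of the extracted subsets is a monochromatic $r_1 r_2$-hyperedge, a contradiction. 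The factorization $4=2\times 2$ is therefore what makes $r=4$ tractable, not a direct topological attack on the $4$-uniform object.

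Second, your explanation of the $3\mid n$ condition is incorrect. It has nothing to do with the existence of a free $\mathbb{Z}_3$-action or a $\mathbb{Z}_3$-Tucker lemma. The lemma produces the chromatic number for the \emph{vector-stable} hypergraph with last gap $2^{m+1}$ rather than $3\cdot 2^m$; to pass to the uniform $3\cdot 2^m$-stable hypergraph one uses the inclusion $\mathrm{KG}^{2^{m+1}}(n-2^m,k)_{(3\cdot 2^m,\ldots,3\cdot 2^m,2^{m+1})\text{-stab}} \subseteq \mathrm{KG}^{2^{m+1}}(n,k)_{3\cdot2^m\text{-stab}}$, and the lower bound so obtained matches the upper bound only when the ceiling $\lceil (n-3\cdot 2^m(k-1)-2^m)/(2^{m+1}-1)\rceil$ equals $\lceil (n-3\cdot 2^m(k-1))/(2^{m+1}-1)\rceil$. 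That is a pure ceiling-function identity, which for $m=1$ reduces to $3\mid n$. The $k=2$ case is unconditional because the known $r=2$, $k=2$ result allows taking the full vector $(3,3)$ as input, so the dilated vector is already the uniform one and no sandwich is needed. Tucker's lemma does appear in the paper, but in the proof of Theorem~\ref{Theorem:Main2} (the $r=2$ vector-stable chromatic number), not directly in the proof of Theorem~\ref{Thm:main1}.
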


Alon et al.~\cite[Lemma 3.3]{Alon2009} demonstrated that if Conjecture~\ref{Conj: zig} holds for $r_1$ and $r_2$ (for all values of $n$ and $k$), then it is valid for $r_1r_2$. By combining this lemma with the Schrijver result, they succeeded in validating Conjecture~\ref{Conj: zig} when $r$ is a power of $2$~\cite[Corollary 3.4]{Alon2009}. Regarding Conjecture~\ref{Conj: Meu}, significant progress has been made when $s < r$. Notably, Frick~\cite[Theorem 1.1]{frick2020chromatic} confirmed the validity of Conjecture~\ref{Conj: Meu} for $s = 2$ for all $r \geq 2$, as well as for $ r > 6s-6$ when $r$ is a prime power and $s \geq 2$~\cite[Theorem 1.4]{frick2020chromatic}. Additionally, results for $r \geq 4$ ($r\neq 6, 12$) and $s = 3$ have been established under specific conditions on $n$ and $k$~\cite[Theorem 3.8]{frick2020chromatic}. In contrast, when stability exceeds uniformity $s > r$, the existing knowledge is limited except in the graph case. In fact, when $r=2$, Chen~\cite{chen2015multichromatic} showed that Conjecture~\ref{Conj: Meu} holds for all even $s$, and Jonsson~\cite{jonsson2012chromatic} proved it for all $s \geq 4$ provided $n$ is sufficiently large. The current author and his colleagues~\cite{daneshpajouh2021colorings} also validated this conjecture for all $s \geq 2$ in the case where $k = 2$. However, in the case of hypergraphs, when $r\geq 3$, to the best of our knowledge, not much has been known, except limited computational verifications for small parameter values ($r \in \{3,4\}$, $s \in \{4,5\}$, $k \in \{2,3\}$, and $n \leq 14$)~\cite{Me11}. Prior work established that Conjecture~\ref{Conj: Meu} holds when $r$ is a power of 2 and $s$ is a multiple of $r$ as a consequence of the Chen result~\cite[Corollary 5]{chen2015multichromatic} combined with a combinatorial lemma~\cite[Lemma 4]{Me11}. This lemma asserts that if Conjecture~\ref{Conj: zig} holds for $r_1$ (for all $n,k$) and Conjecture~\ref{Conj: Meu} holds for $(r_2,s_2)$ with $s_2 \geq r_2$ (for all $n,k$), then Conjecture~\ref{Conj: Meu} also holds for $(r_1r_2, r_1s_2)$. However, as noted in~\cite{Meunier-website}, the proof of this lemma contains a gap, leaving its validity uncertain.

In this work, we develop an analogue of Meunier's combinatorial lemma by using the concept of vector-stable Kneser hypergraphs (the definition is provided immediately following Corollary~\ref{coro:2}) and imposing precise parameter constraints.
While the fundamental proof strategy remains largely unaltered, this adjustment enables us to circumvent the previously mentioned gap, albeit for a somewhat weaker variant of that lemma. Combining this lemma with established results and recent advances from~\cite{daneshpajouh2021neighborhood, daneshpajouh2021colorings}, we obtain one of the main results of this paper.

\begin{theorem}\label{Thm:main1}
    Conjecture~\ref{Conj: Meu} holds for parameters \( r = 2^{m+1} \) and \( s = 3 \times 2^{m} \) with \( m \geq 1 \), provided
    \[
        n - 3 \times 2^{m}(k-1) \bmod (2^{m+1} - 1) \in \{2^{m}+1, \dots, 2^{m+1}-1\}.
    \]
    Furthermore, when \( k = 2 \), this condition becomes unnecessary; that is, Conjecture~\ref{Conj: Meu} holds unconditionally for \( r = 2^{m+1} \), \( s = 3 \times 2^{m} \), and \( k = 2 \).
\end{theorem}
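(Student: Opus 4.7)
The plan is to deduce Theorem~\ref{Thm:main1} from the vector-stable analogue of Meunier's combinatorial lemma developed earlier in this paper, instantiated at the factorization $r = r_1 r_2$, $s = r_1 s_2$ with $r_1 = 2^m$, $r_2 = 2$, $s_2 = 3$. This realizes $(r,s) = (2^{m+1},\, 3 \cdot 2^m)$ as the ``product'' of a pure Kneser input at uniformity $r_1$ and an $s$-stable graph input at parameters $(r_2, s_2)$. Once the lemma is applied, the two base facts it demands are already available in the literature, so the conjecture will follow in the stated regime.

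\textbf{Base inputs.} The first input is Conjecture~\ref{Conj: zig} at $r_1 = 2^m$, which is a corollary of Schrijver's theorem via the power-of-two composition of Alon et al.~\cite[Corollary~3.4]{Alon2009}. The second is Conjecture~\ref{Conj: Meu} at $(r_2, s_2) = (2, 3)$, namely $\chi(\mathrm{KG}^2(n,k)_{3\text{-stab}}) = n - 3(k-1)$, which in the regimes fed into the reduction is supplied by the progress reported in~\cite{daneshpajouh2021neighborhood,daneshpajouh2021colorings}; when $k = 2$, the latter gives the required stable-graph chromatic number unconditionally.

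\textbf{The reduction and the arithmetic condition.} Assume for contradiction a proper coloring of $\mathrm{KG}^{2^{m+1}}(n,k)_{3 \cdot 2^m\text{-stab}}$ with strictly fewer than $\lceil (n - 3 \cdot 2^m (k-1))/(2^{m+1}-1)\rceil$ colors. One partitions $[n]$ into $r_1 = 2^m$ consecutive cyclic blocks and notes that a $3\cdot 2^m$-stable $k$-subset of $[n]$ picks a $3$-stable subset inside each block. Via the vector-stable framework, this hypothetical coloring descends to a proper coloring of a vector-stable $r_2$-uniform Kneser graph on a ground set of size about $\lfloor n/r_1 \rfloor$, using fewer colors than the second base input allows, a contradiction. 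The residue condition $n - 3\cdot 2^m(k-1) \bmod (2^{m+1}-1) \in \{2^m+1,\ldots,2^{m+1}-1\}$ is precisely what guarantees that the block sizes can be chosen with the correct slack for the vector-stability vector to land on the nose; outside this residue class the reduction loses a unit in some coordinate and the target bound no longer matches the ceiling.

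\textbf{Main obstacle and the $k=2$ case.} The technical heart of the argument is this alignment between block sizes and the vector-stability vector; this is where the gap in Meunier's original lemma resurfaces and it is the reason for imposing the residue hypothesis. When $k = 2$, only a single inter-point gap need be controlled, so the reduction can absorb any residue by a suitable shift of the blocks, and combined with the unconditional $k = 2$ input of~\cite{daneshpajouh2021colorings} this eliminates the arithmetic hypothesis and yields the second, unconditional, assertion of the theorem.
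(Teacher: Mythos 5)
Your proposal correctly identifies the factorization $r = r_1 r_2$ with $r_1 = 2^m$, $r_2 = 2$, $s = r_1 s_2$ with $s_2 = 3$, and correctly invokes Alon--Drewnowski--{\L}uczak for the $r_1 = 2^m$ Kneser input. However, there is a critical gap in the choice of the \emph{second} base fact. You take it to be $\chi\big(\mathrm{KG}^2(n,k)_{3\text{-stab}}\big) = n - 3(k-1)$, claiming it is supplied by the cited references; but Conjecture~\ref{Conj: Meu} at $(r,s)=(2,3)$ is \emph{not} known for general $n,k$: Chen's result covers even $s$, Jonsson's covers $s\ge 4$ with $n$ large, and the remaining reference covers only $k=2$. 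Exactly this obstruction is what the paper's approach is built to avoid: instead of the ordinary $3$-stable Kneser graph, it feeds into Lemma~\ref{Lemma: Main} the \emph{vector}-stable Kneser graph $\mathrm{KG}^2(n,k)_{(3,\ldots,3,2)\text{-stab}}$ (last stability relaxed from $3$ to $2$), whose chromatic number $n - 3(k-1)$ \emph{is} known by Theorem~\ref{Theorem: Main of Hamid & Jozsef}. This produces the exact chromatic number of the vector-stable hypergraph $\mathrm{KG}^{2^{m+1}}(n,k)_{(3\cdot 2^m,\ldots,3\cdot 2^m,2^{m+1})\text{-stab}}$, and the residue hypothesis is then needed when passing from this to the uniformly $3\cdot 2^m$-stable hypergraph: the latter contains the former on $n - 2^m$ vertices as a subhypergraph, so one gets a lower bound with numerator decremented by $2^m$, and the residue class is precisely what makes the two ceilings agree. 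Your explanation of the residue condition as a ``slack in block sizes'' is therefore not the actual mechanism, and the ``partition $[n]$ into $2^m$ cyclic blocks'' picture does not match Lemma~\ref{Lemma: Main} either: that lemma does not partition $[n]$, but rather builds a coloring of $\mathrm{KG}^{r_1}(n,m)_{r_1\text{-stab}}$ for a specific $m$, applies the two lower-bound hypotheses in two nested layers, and extracts an $r_1 r_2$-sized monochromatic hyperedge. For $k = 2$, the actual reason the residue condition drops is not a shift of blocks: since $(s_1,s_2)=(3,3)$ satisfies $s_2 \le r_2+1$, one can feed the ordinary $3$-stable Kneser graph (whose chromatic number is known for $k = 2$) directly into Lemma~\ref{Lemma: Main}, bypassing the vector-stable detour and the subhypergraph step that creates the residue restriction.
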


Specifically, applying Theorem~\ref{Thm:main1} to the case where $m=1$, we obtain the following result.

\begin{corollary}\label{corollary:1}
For all integers $n \geq 6k$, we have
\[
\chi\left(\mathrm{KG}^{4}(n,k)_{6\textup{-stab}}\right) = \left\lceil\frac{n - 6(k-1)}{3}\right\rceil,
\]
provided either $3$ divides $n$ or $k=2$.
\end{corollary}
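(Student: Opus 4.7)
The plan is to obtain Corollary~\ref{corollary:1} as a direct specialization of Theorem~\ref{Thm:main1} to $m = 1$. Substituting $m = 1$ gives $r = 2^{m+1} = 4$ and $s = 3 \cdot 2^{m} = 6$, so $\max\{r,s\} = 6$, and the Meunier formula
\[
\chi\left(\mathrm{KG}^{r}(n,k)_{s\text{-stab}}\right) = \left\lceil \frac{n - \max\{r,s\}(k-1)}{r-1} \right\rceil
\]
collapses to exactly $\lceil (n - 6(k-1))/3 \rceil$, matching the right-hand side of the corollary. The domain condition $n \geq 6k$ in the corollary coincides with $n \geq \max\{r,s\}k$ in Conjecture~\ref{Conj: Meu}, so no further assumption on $n$ is needed beyond what is already stated.

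Second, I would unpack the modular hypothesis of Theorem~\ref{Thm:main1} for this specialization. With $m = 1$ we have $2^{m+1} - 1 = 3$ and the admissible residue set is $\{2^{m}+1, \ldots, 2^{m+1}-1\} = \{3\}$. Since $6(k-1) \equiv 0 \pmod{3}$ for every integer $k$, the hypothesis
\[
(n - 6(k-1)) \bmod 3 \in \{3\}
\]
(with the convention that residues are taken in $\{1, \ldots, 2^{m+1}-1\}$, so a true zero residue is recorded as $2^{m+1}-1 = 3$) simplifies to $n \equiv 0 \pmod 3$, that is, $3 \mid n$. This is precisely the first alternative in the corollary's hypothesis. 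The second alternative, $k = 2$, is covered directly by the unconditional clause in Theorem~\ref{Thm:main1} that removes the modular restriction when $k = 2$.

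Because the corollary is essentially a clean restatement of Theorem~\ref{Thm:main1} at $m = 1$, I do not foresee a substantive obstacle: no further topological or combinatorial input is required beyond the theorem itself. The only point that demands minor care is the bookkeeping of the residue convention inherited from the theorem, and once the observation $6(k-1) \equiv 0 \pmod{3}$ is made, the translation from the modular condition to the divisibility statement $3 \mid n$ is immediate and the two cases of the corollary follow without further work.
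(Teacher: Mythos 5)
Your proposal is correct and follows exactly the paper's route: the corollary is obtained by specializing Theorem~\ref{Thm:main1} to $m=1$, yielding $r=4$, $s=6$, admissible residue set $\{3\}$ modulo $3$, and (since $6(k-1)\equiv 0\pmod 3$) the divisibility condition $3\mid n$, while the $k=2$ clause handles the other case. Your careful note on the residue convention (reading $0$ as $2^{m+1}-1$) is the right way to reconcile the theorem's stated set with the ceiling-function identity used in its proof.
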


Moreover, observe that $\mathrm{KG}^{4}(n,2)_{6\textup{-stab}}$ is a subgraph of $\mathrm{KG}^{4}(n,2)_{5\textup{-stab}}$. Therefore, Corollary~\ref{corollary:1} implies
\[
\left\lceil\frac{n-6}{3}\right\rceil \leq \chi\left(\mathrm{KG}^{4}(n,2)_{6\textup{-stab}}\right) \leq \chi\left(\mathrm{KG}^{4}(n,2)_{5\textup{-stab}}\right).
\]
Moreover, it is known from~\cite{Me11} that $\mathrm{KG}^r(n,k)_{s\textup{-stab}}$ admits a coloring using
\[
\left\lceil\frac{n - \max\{r,s\}(k-1)}{r-1}\right\rceil
\]
colors. Consequently, we obtain
\[
\chi\left(\mathrm{KG}^{4}(n,2)_{5\textup{-stab}}\right) \leq \left\lceil\frac{n-5}{3}\right\rceil.
\]
Since 
\[
\left\lceil\frac{n-6}{3}\right\rceil = \left\lceil\frac{n-5}{3}\right\rceil
\]
whenever $3 \nmid n$, we establish the following corollary:

\begin{corollary}\label{coro:2}
For all integers $n \geq 10$ with $3 \nmid n$, we have
\[
\chi\left(\mathrm{KG}^{4}(n,2)_{5\textup{-stab}}\right) = \left\lceil\frac{n-5}{3}\right\rceil.
\]
\end{corollary}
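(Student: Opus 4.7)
The plan is to bracket $\chi\bigl(\mathrm{KG}^{4}(n,2)_{5\textup{-stab}}\bigr)$ between two ceiling expressions that collapse to the same value precisely when $3 \nmid n$, exactly as signposted in the paragraph preceding the statement. The lower bound will come from Corollary~\ref{corollary:1} applied to the $6$-stable hypergraph, and the upper bound from Meunier's explicit coloring construction.

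For the lower bound, I would note the trivial monotonicity: any $6$-stable $2$-subset of $[n]$ is automatically $5$-stable, so $\mathrm{KG}^{4}(n,2)_{6\textup{-stab}}$ is an induced subhypergraph of $\mathrm{KG}^{4}(n,2)_{5\textup{-stab}}$ and therefore
\[
\chi\bigl(\mathrm{KG}^{4}(n,2)_{6\textup{-stab}}\bigr) \leq \chi\bigl(\mathrm{KG}^{4}(n,2)_{5\textup{-stab}}\bigr).
\]
Since the hypothesis is $k = 2$, the unconditional branch of Corollary~\ref{corollary:1} evaluates the left-hand side as $\bigl\lceil (n-6)/3 \bigr\rceil$, giving the required lower bound.

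For the upper bound, I would invoke the known coloring of Meunier~\cite{Me11}, already recalled in the excerpt, which shows $\chi\bigl(\mathrm{KG}^{r}(n,k)_{s\textup{-stab}}\bigr) \leq \bigl\lceil (n - \max\{r,s\}(k-1))/(r-1) \bigr\rceil$. Specializing to $(r,k,s) = (4,2,5)$ yields $\chi\bigl(\mathrm{KG}^{4}(n,2)_{5\textup{-stab}}\bigr) \leq \bigl\lceil (n-5)/3 \bigr\rceil$. It remains to verify the elementary identity $\bigl\lceil (n-6)/3 \bigr\rceil = \bigl\lceil (n-5)/3 \bigr\rceil$ whenever $3 \nmid n$: if $n \equiv 1 \pmod 3$ both equal $(n-1)/3$, and if $n \equiv 2 \pmod 3$ both equal $(n+1)/3$. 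The sandwich then pins down the chromatic number.

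Honestly, there is no substantive obstacle at this stage: the corollary is a packaging step, and the work is entirely absorbed into Corollary~\ref{corollary:1} and, further upstream, Theorem~\ref{Thm:main1}. The only conceptual point worth flagging is the role of the hypothesis $3 \nmid n$, which is exactly what is needed to make the $6$-stable lower bound meet the $5$-stable upper bound; when $3 \mid n$ the two ceilings differ by $1$ and the sandwich argument degrades to a gap of one.
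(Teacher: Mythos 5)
Your proposal is correct and follows exactly the same sandwich argument the paper uses: monotonicity of the stability condition to bring Corollary~\ref{corollary:1} (the $k=2$ branch) into play for the lower bound $\lceil(n-6)/3\rceil$, Meunier's coloring for the upper bound $\lceil(n-5)/3\rceil$, and the elementary ceiling identity when $3\nmid n$ to close the gap. Nothing to add.
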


As previously mentioned, an essential component in these advancements is the investigation of the chromatic number of vector-stable Kneser graphs, a class of graphs introduced in~\cite{daneshpajouh2021neighborhood}. To contextualize this work and present our two next results, we require a broader notion of stability. For a $k$-subset $A \subseteq [n]$, let $A(1), A(2), \ldots, A(k)$ denote its elements in increasing order. Given a positive integer vector $\vec{s} = (s_1, \ldots, s_k)$, a $k$ subset $A$ is called \textit{$\vec{s}$ stable} if it satisfies conditions $s_j \leq A(j+1) - A(j)$ for $1 \leq j \leq k-1$ and $A(k) - A(1) \leq n - s_k$. Note that the case $\vec{s} = (s, \ldots, s)$ recovers the standard definition of $s$-stability. The $r$-uniform $\vec{s}$-stable Kneser hypergraph $\mathrm{KG}^r(n,k)_{\vec{s}\textup{-stab}}$ is then defined as the hypergraph whose vertex set consists of all $\vec{s}$-stable $k$-subsets of $[n]$, and whose hyperedges are $r$-tuples of pairwise disjoint such subsets. The graph case ($r=2$) was the focus of~\cite{daneshpajouh2021neighborhood}, whose main result includes the following theorem.

\begin{theorem}[{\cite[Theorem 4]{daneshpajouh2021neighborhood}}]\label{Theorem: Main of Hamid & Jozsef}
Let $n, k$ be positive integers with $k \geq 2$, and let $\vec{s} = (s_1, \ldots, s_k)$ be an integer vector such that $s_i \geq 2$ for $i = 1, \ldots, k-1$, $s_k \in \{1,2\}$, and $n \geq \sum_{i=1}^{k-1} s_i + 2$. Then,
	\[
	\chi\left(\mathrm{KG}^{2}(n, k)_{\vec{s}\textup{-stab}}\right) = n - \sum_{i=1}^{k-1} s_i.
	\] 
\end{theorem}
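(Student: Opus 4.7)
The plan is to establish the equality by matching an upper bound coloring with a topological lower bound, with the bulk of the work in the latter.

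For the upper bound, color each $\vec{s}$-stable $k$-subset $A = \{A(1) < \cdots < A(k)\}$ by its minimum element $A(1)$. The telescoping inequality $A(k) \geq A(1) + \sum_{j=1}^{k-1} s_j$, together with $A(k) \leq n$, forces $A(1) \in \{1, \ldots, n - \sum_{j=1}^{k-1} s_j\}$, so only that many colors are used; properness is immediate since two disjoint $k$-subsets cannot share their minimum. Note that $s_k$ plays no role in this direction.

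For the lower bound, set $N := n - \sum_{j=1}^{k-1} s_j$, assume for contradiction that a proper coloring $c$ with only $N-1$ colors exists, and derive a contradiction from Tucker's lemma. Concretely, I would construct an antipodal labeling $\lambda : \{+,-,0\}^N \setminus \{0\} \to \{\pm 1, \ldots, \pm(N-1)\}$ with no complementary edge. The heart of the construction is an embedding $\iota : [N] \hookrightarrow [n]$ that spreads coordinates with gaps aligned to the vector $\vec{s}$, extended to sign vectors $\tilde{x} \in \{+,-,0\}^n$ by zero-filling the positions outside $\iota([N])$. The key property to arrange is that once $\tilde{x}$ carries at least $k$ nonzero coordinates of a given sign, its positive (respectively negative) support in $[n]$ contains an $\vec{s}$-stable $k$-subset; for such $x$, set $\lambda(x) = \pm c(A^\pm(x))$, where $A^\pm(x)$ is a canonical choice of $\vec{s}$-stable $k$-subset drawn from the larger sign class. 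For sign vectors not carrying $k$ coordinates of either sign, label by $\lambda(x) = \pm i$, where $i$ is the position of the last nonzero coordinate and the sign agrees with $\tilde{x}_{\iota(i)}$.

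\textbf{The main obstacle} is arranging $\iota$ and the canonical selection rule so that three conditions hold simultaneously: (i) the selected $A^\pm(x)$ is genuinely $\vec{s}$-stable in $[n]$, which requires placing coordinates with a gap pattern so that every $k$-subset extracted from a sign support respects both the internal constraints $A(j+1) - A(j) \geq s_j$ and the wrap-around constraint $A(k) - A(1) \leq n - s_k$ — this is exactly where the hypothesis $s_k \in \{1,2\}$ is used, since small $s_k$ makes the cyclic condition almost free; (ii) the labeling is $\mathbb{Z}_2$-equivariant, $\lambda(-x) = -\lambda(x)$, which follows from a symmetric choice rule for $A^\pm$; and (iii) no complementary edge, meaning that whenever $x$ is a face of $y$ in $\{+,-,0\}^N$ with $\lambda(x) = -\lambda(y)$, one can exhibit disjoint $\vec{s}$-stable $k$-subsets assigned the same color by $c$, contradicting properness. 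Condition (iii) is where monotonicity of the canonical selection under the face order plays the decisive role, and is typically the most delicate step in arguments of this type; it is also where the vector-stability refinement departs most visibly from the classical Schrijver/Ziegler setup.
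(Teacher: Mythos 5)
The paper does not prove this statement; it is imported verbatim as Theorem~4 of the cited reference [daneshpajouh2021neighborhood], so there is no in-paper proof to compare against. What the paper does contain is a Tucker-lemma proof of the closely related Theorem~\ref{Theorem:Main2} part (ii), which works directly on $\{-1,0,1\}^n$ with alternating numbers and crucially requires every $s_i$ ($i<k$) to be even so that the extracted $k$-set lands in a single sign class. The statement you are proving has no evenness hypothesis, so the cited proof must avoid that obstruction by some other device; your proposal does not do so either, and moreover has gaps that are fatal on their own terms.

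Your upper bound (coloring by the minimum element) is correct and is exactly the $r=2$ case of the paper's Lemma~\ref{lemma: coloring}.

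The lower bound, however, cannot work as described. You want an embedding $\iota:[N]\hookrightarrow[n]$ with $N=n-\sum_{j=1}^{k-1}s_j$ such that whenever $\geq k$ coordinates of a sign vector in $\{+,-,0\}^N$ share a sign, the corresponding positions in $[n]$ contain an $\vec s$-stable $k$-set. This ``key property'' is impossible for large $n$: consider a sign vector supported on $k$ consecutive positions $i,\dots,i+k-1$ of $[N]$. The only candidate $k$-set is $\{\iota(i),\dots,\iota(i+k-1)\}$, so the property forces every window of $k-1$ consecutive gaps of $\iota$ to sum to at least $\sum_{j=1}^{k-1}s_j\geq 2(k-1)$. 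Summing this over all $N-k+1$ windows (each gap counted at most $k-1$ times) gives $\iota(N)-\iota(1)\geq 2(N-k+1)=2\bigl(n-\sum s_j-k+1\bigr)$, which exceeds $n-1$ once $n$ is large, contradicting $\iota(N)\leq n$. So no such $\iota$ exists in the regime the theorem covers.

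There are further internal problems. Tucker's lemma on $\{-1,0,1\}^N$ gives $m\geq N$, but your sparse labels (``position of the last nonzero coordinate'') already range up to $N$, and your dense labels $\pm c(\cdot)$ range in $\{\pm1,\dots,\pm(N-1)\}$; the two ranges overlap, so a sparse face $A_1\preceq A_2$ and a dense coface $A_2$ can easily produce $\lambda(A_1)=-\lambda(A_2)$ even when the coloring is proper, and the intended contradiction never materializes. Also, the claim that the cyclic constraint is ``almost free'' is only true for $s_k=1$; for $s_k=2$ the condition $A(k)-A(1)\leq n-2$ is a genuine constraint that your selection rule would have to enforce. The natural fix is to abandon the reduced domain entirely, work on $\{-1,0,1\}^n$ as in the paper's proof of Theorem~\ref{Theorem:Main2}(ii), use the alternating number with threshold $\alpha=\sum_{i=1}^{k-1}s_i+s_k$ (or similar) to separate sparse from dense labels, and then confront honestly the parity issue that arises when some $s_i$ is odd---which is precisely the nontrivial content that the cited theorem resolves and your sketch leaves untouched.
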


Then the authors naturally posed the following question:

\begin{question}[{\cite[Question 16]{daneshpajouh2021neighborhood}}]\label{Question: Main of Hamid & Jozsef}
Let $\vec{s} = (s_1, \ldots, s_k)$ be an arbitrary positive integer vector and $n \geq \sum_{i=1}^{k} s_i$. What is the chromatic number of $\mathrm{KG}^{2}(n, k)_{\vec{s}\textup{-stab}}$ in terms of $n$, $k$, and $\vec{s}$?
\end{question}

Our next result provides a partial answer to this question. In particular, it shows that when $s_k$ exceeds the minimum of the parameters $s_1, \ldots, s_{k-1}$, the chromatic number of $\mathrm{KG}(n,k)_{\vec{s}\text{-stab}}$ deviates from the expression $n - \sum_{i=1}^{k-1} s_i$. The precise behavior is captured in the following theorem.

\begin{theorem}\label{Theorem:Main2}
Let $k \geq 2$, and let $\vec{s} = (s_1, \ldots, s_k)$ be a vector of positive integers such that $s_i \geq 2$ for all $1 \leq i \leq k-1$. Set $m = \min\{s_1, \ldots, s_{k-1}\}$ and assume $s_k \leq 2m$. If $n \geq \sum_{i=1}^{k} s_i$ and either
\begin{enumerate}
    \item[(i)] $m = 2$, or
    \item[(ii)] $s_i$ is even for every $1 \leq i \leq k-1$,
\end{enumerate}
then
\[
\chi\left(\mathrm{KG}^{2}(n, k)_{\vec{s}\textup{-stab}}\right) = n - \sum_{i=1}^{k-1} s_i - \max\left\{0, s_k - m\right\}.
\]
\end{theorem}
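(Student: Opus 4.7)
Set $N := n - \sum_{i=1}^{k-1} s_i - \max\{0, s_k - m\}$. The plan is to establish the two inequalities $\chi \leq N$ and $\chi \geq N$ separately.

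\textbf{Upper bound.} For $\chi \leq N$, I propose to construct a proper $N$-coloring directly. When $s_k \leq m$, the simple Schrijver-style assignment $c(A) := A(1)$ uses exactly $n - \sum_{i<k} s_i = N$ colors and is proper since $c(A) = c(B)$ forces $A(1) = B(1) \in A \cap B$. When $s_k > m$, the same assignment uses $N + (s_k - m)$ colors, exceeding $N$ by $s_k - m$; the plan is to reduce it by identifying the ``top'' vertices whose first element lies in the range $[N+1,\, N + (s_k - m)]$ with vertices in lower color classes, exploiting the fact that the reinforced wraparound $s_k > m$ forces each such top vertex to share an element with every candidate vertex in the lower classes it gets merged into.

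\textbf{Lower bound, Case (i).} Assume $m = 2$. Set $t = \max\{0, s_k - 2\}$, $n' = n - t$, and $\vec{s}' = (s_1, \ldots, s_{k-1}, 2)$. The inclusion $[n'] \hookrightarrow [n]$ induces a graph homomorphism
\[
\mathrm{KG}^{2}(n', k)_{\vec{s}'\textup{-stab}} \longrightarrow \mathrm{KG}^{2}(n, k)_{\vec{s}\textup{-stab}},
\]
since an $\vec{s}'$-stable subset $A \subseteq [n']$ keeps its internal gap constraints intact and acquires wrap gap $n - A(k) + A(1) = (n' + t) - A(k) + A(1) \geq 2 + t = s_k$ in $[n]$. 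The source has chromatic number $n' - \sum_{i<k} s_i = N$ by Theorem \ref{Theorem: Main of Hamid & Jozsef} (as $s_k' = 2$), so $\chi \geq N$.

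\textbf{Lower bound, Case (ii).} When all $s_i$ $(i<k)$ are even but $m > 2$, the reduction just used yields $\vec{s}' = (s_1, \ldots, s_{k-1}, m)$ with $s_k' = m > 2$, outside the scope of Theorem \ref{Theorem: Main of Hamid & Jozsef}. I anticipate this to be the main obstacle. The plan is to prove the bound directly by a Tucker-type topological argument mirroring the proof of Theorem \ref{Theorem: Main of Hamid & Jozsef}: a hypothetical proper coloring using fewer than $N$ colors is encoded as a $\mathbb{Z}_2$-equivariant labeling of a centrally symmetric simplicial complex whose simplices are pairs of disjoint sign supports in $\{+,-,0\}^{n}$ obeying the $\vec{s}$-stability gap conditions in a cyclic sense. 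The evenness of $s_1, \ldots, s_{k-1}$ allows pairing adjacent coordinates so that the resulting complex is an $(N-1)$-dimensional sphere, and Tucker's lemma applied to it contradicts the coloring. The technical heart is verifying that the complex has exactly the right dimension and is antipodally free; this is where the parity hypothesis and the bound $s_k \leq 2m$ enter.
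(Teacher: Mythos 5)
Your high-level outline (upper bound by coloring, lower bound via subgraph reduction to Theorem~\ref{Theorem: Main of Hamid & Jozsef} for $m=2$, and Tucker's lemma for the even case) matches the paper's strategy, but two of the three components are left incomplete in ways that are not routine.

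\textbf{Upper bound when $s_k > m$.} Your plan to start from $c(A)=A(1)$ and then ``merge'' the top $s_k-m$ color classes into lower ones is not worked out, and it is unclear it survives as stated: you would need to exhibit a specific merging rule and verify properness, and the class you would merge into must not already contain a set disjoint from the merged one. The paper avoids this entirely (its Lemma~\ref{Lemma: 1 coloring}) by first arranging, via an index-shift homomorphism, that $s_1=m$, and then taking the shifted color window $\{s_1+1,\ldots,n-(\sum_{i=2}^k s_i - s_1)\}$ and coloring $B$ by its least element in that window. The argument then reduces to showing no $\vec{s}$-stable set avoids the window, which follows cleanly from the wrap constraint and $s_k\leq 2m$. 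Your ``merge'' heuristic may be repairable, but as written it is a gap.

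\textbf{Lower bound, Case (ii).} You correctly identify this as the crux and correctly name the tool (Tucker's lemma) and the role of parity, but the proposal stops at the level of ``build a sphere complex of dimension $N-1$ and apply Tucker,'' which is not a proof. The concrete construction the paper uses, after reducing to $s_k=m$ exactly as you do, is a combinatorial Tucker labeling $\lambda:\{-1,0,1\}^n\setminus\{0\}\to\{\pm1,\ldots,\pm(t+\sum_{i<k}s_i)\}$ with a threshold at $\alpha=\sum_{i=1}^k s_i$: for $|\mathrm{Alt}(A)|\leq\alpha$ set $\lambda(A)=\mathrm{sgn}(A)\cdot|\mathrm{Alt}(A)|$; for $|\mathrm{Alt}(A)|>\alpha$, extract from the alternating subsequence $i_1<\cdots<i_\ell$ the $s_k$ sets $F_j=\{i_j,i_{j+s_1},i_{j+s_1+s_2},\ldots\}$ for $j=1,\ldots,s_k$, which are pairwise disjoint $\vec{s}$-stable $k$-subsets. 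The evenness of $s_1,\ldots,s_{k-1}$ is used precisely to guarantee that each $F_j$ lands entirely in $A^+$ or entirely in $A^-$ (since consecutive indices in $\mathrm{Alt}(A)$ alternate sign, jumping an even number of steps preserves sign). One then labels by the sign of a minimum-color $F_{j_0}$ and the value $\alpha+c(F_{j_0})$, and a direct check of the two Tucker conditions yields $t+\sum_{i<k}s_i\geq n$. Your sketch does not contain the $F_j$ construction, which is exactly where $s_k\leq 2m$ (via the reduction to $s_k=m$ ensuring $s_k\cdot 1 + \sum_{i<k}s_i \leq \alpha$ so the $F_j$'s fit in the alternating segment) and the parity hypothesis actually do their work. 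Without it the argument does not close.

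\textbf{Lower bound, Case (i).} This part is correct and is the same reduction the paper uses (its Lemma~\ref{Lemma:equivalence}), followed by Theorem~\ref{Theorem: Main of Hamid & Jozsef}.
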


Theorem~\ref{Theorem:Main2} might lead one to conjecture that the formula
\[
\chi\left(\mathrm{KG}^{2}(n, k)_{(s_1,\ldots, s_{k-1}, s_k)\textup{-stab}}\right) = n - \sum_{i=1}^{k-1} s_i - \max\left\{0, s_k - m\right\}
\]
holds generally for all $s_k$. The following theorem shatters this intuition, proving the conjecture false. Indeed, it shows that $s_k$ can be increased beyond $2m$ up to a certain threshold without altering the chromatic number.
\begin{theorem}\label{Theorem: Main3}
Let $n$ and $s_1$ and $s_2$ be positive integers such that  $2s_1\leq s_2\leq 3s_1$ and $n \geq 2s_2-2$. Then,
\[
\chi\left(\mathrm{KG}^{2}(n, 2)_{(s_1, s_2)\textup{-stab}}\right) = n - 2s_1.
\]
\end{theorem}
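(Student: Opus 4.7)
The proof splits into an upper bound $\chi \le n - 2s_1$ and a matching lower bound.

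For the upper bound, my plan is to describe an explicit coloring using ``middle elements.'' The key combinatorial observation, which relies on $s_2 \ge 2s_1$, is that every $(s_1, s_2)$-stable pair $\{a, b\}$ with $a < b$ satisfies $\{a, b\} \cap I \ne \emptyset$, where $I := \{s_1 + 1, s_1 + 2, \ldots, n - s_1\}$. Indeed, if $a \ge s_1 + 1$, then $a \le b - s_1 \le n - s_1$, so $a \in I$. If instead $a \le s_1$, the bound $b - a \le n - s_2 \le n - 2s_1$ combined with $b \ge a + s_1 \ge s_1 + 1$ gives $b \in I$. Setting $c(\{a, b\}) := \min(\{a, b\} \cap I)$ then produces a proper coloring with $|I| = n - 2s_1$ colors, since two pairs sharing a color both contain the element equal to that color and thus are not disjoint.

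For the lower bound $\chi \ge n - 2s_1$, my plan is to invoke a topological argument, since this is where the ceiling $s_2 \le 3s_1$ must enter. I would adapt the $\mathbb{Z}_2$-equivariant box-complex / Tucker-labeling framework behind Theorems~\ref{Theorem: Main of Hamid & Jozsef} and~\ref{Theorem:Main2}: assume for contradiction a proper coloring with $n - 2s_1 - 1$ colors, encode it as a $\mathbb{Z}_2$-equivariant simplicial map from the $(n - 2s_1 - 2)$-sphere into the box complex of $\mathrm{KG}^2(n, 2)_{(s_1, s_2)\textup{-stab}}$, and extract from Borsuk--Ulam / Tucker a pair of same-coloured, disjoint $(s_1, s_2)$-stable pairs. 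The core technical step is to show that the $\mathbb{Z}_2$-index of this box complex is at least $n - 2s_1 - 2$ under the combined hypotheses $n \ge 2s_2 - 2$ and $2s_1 \le s_2 \le 3s_1$; in this step one expects the ceiling to appear through a careful analysis of which antipodal labels can be realized by $(s_1, s_2)$-stable pairs on the cycle $[n]$.

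The main obstacle is this lower bound, and I expect all direct combinatorial substitutes to fall short once $s_2 > 2s_1$. For instance, the identity embedding of the Schrijver subgraph $\mathrm{KG}^2(n - s_1, 2)_{s_1\textup{-stab}}$ into our graph forces $s_2 \le 2s_1$; no graph homomorphism from the strictly larger graph $\mathrm{KG}^2(n, 2)_{(s_1, 2s_1)\textup{-stab}}$ into ours exists, because a forbidden pair such as $\{1, n - 2s_1 + 1\}$ has enough disjoint $(s_1, s_2)$-stable partners that it would be forced to map to itself; the clique number is at most $\lfloor n/2 \rfloor$, which is strictly less than $n - 2s_1$ once $n > 4s_1$; and a direct Dolnikov--Kříž $2$-colorability defect calculation falls well short of $n - 2s_1$ in the relevant range. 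So the subtle stability structure for $2s_1 \le s_2 \le 3s_1$ really does require a genuinely topological input.
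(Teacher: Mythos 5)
Your upper bound is correct and amounts to the same idea the paper uses: you exhibit directly the coloring by the minimum element in the ``middle'' interval $I=\{s_1+1,\dots,n-s_1\}$, while the paper gets the same bound by noting (for $s_2\ge 2s_1$) that $\mathrm{KG}^2(n,2)_{(s_1,s_2)\textup{-stab}}$ is a subgraph of $\mathrm{KG}^2(n,2)_{(s_1,2s_1)\textup{-stab}}$ and then invoking the coloring construction of Lemma~\ref{Lemma: 1 coloring}. So this half is fine.

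The lower bound, however, is not a proof: it is a plan, and a plan pointed in the wrong direction. You assert that ``all direct combinatorial substitutes fall short once $s_2 > 2s_1$'' and that ``a genuinely topological input'' is required; the paper refutes this. The paper's lower bound is purely combinatorial: it views $\mathrm{KG}^2(n,2)_{(s_1,s_2)\textup{-stab}}$ as the complement of the line graph of the circulant-type graph $W(n,s_1,s_2)$, so proper colorings correspond to partitions of the edge set of $W(n,s_1,s_2)$ into stars and triangles (ST-partitions). It then proves two facts about $W(n,s_1,s_2)$ under the given hypotheses --- $\alpha(W(n,s_1,s_2))=2s_1$, which uses $s_2\ge 2s_1$ and $n\ge 2s_2-2$, and the absence of induced butterflies --- and combines them with structural properties of a minimum ST-partition to force at least $n-2s_1$ parts. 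The constraint $s_2\le 3s_1$ enters in the counting step showing that the set $D$ of non-centers outside a chosen triangle has size at most $2s_1-2$. None of this is topological.

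Beyond being misdirected, your topological sketch also has a concrete obstruction: the Tucker-labeling argument in the paper (used for Theorem~\ref{Theorem:Main2}(ii)) needs the parity assumption that $s_1,\dots,s_{k-1}$ are even precisely so that each extracted $k$-set $F_j$ lies entirely inside $A^+$ or entirely inside $A^-$; for general $s_1$ this fails, so there is no evident way to ``adapt'' that labeling to arbitrary $s_1,s_2$ with $2s_1\le s_2\le 3s_1$. You would need a genuinely new equivariant construction, and you neither supply it nor identify where the hypothesis $s_2\le 3s_1$ would be used. As it stands, the lower bound is an unproved gap, and the proposal as written does not establish the theorem.
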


\section*{Acknowledgment} The author would like to thank Fr{\'e}d{\'e}ric Meunier for his valuable comments on an earlier draft of this paper which enhanced the clarity and presentation of the work.

\section{Proof of the main results}\label{proof}

\subsection{Proof of Theorem~\ref{Thm:main1}}
As an initial step, let us present a similar upper bound for the chromatic number of vector-stable Kneser hypergraphs. For convenience, we will define the chromatic number of the null graph (the graph that contains no vertices) to be $-\infty$.

\begin{lemma}\label{lemma: coloring}
For any vector $\vec{s}=(s_1,\ldots, s_k)$ of positive integers, natural numbers $r\geq 2$, and $k\geq 1$ we have
\[
\chi (\mathrm{KG}^{r}(n,k)_{\vec{s}\textup{-stab}})\leq\left\lceil\frac{n-\max\{r(k-1),\sum_{i=1}^{k-1}s_i\}}{r-1}\right\rceil.
\]
\end{lemma}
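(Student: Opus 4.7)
The plan is to construct an explicit proper coloring using $N$ colors, where
\[
N := \left\lceil \frac{n - M}{r-1} \right\rceil, \qquad M := \max\!\left\{r(k-1),\; \sum_{i=1}^{k-1} s_i\right\}.
\]
Writing $S := \sum_{i=1}^{k-1} s_i$, I would color each $\vec{s}$-stable $k$-subset $A = \{A(1) < \cdots < A(k)\}$ by
\[
c(A) := \left\lceil \frac{\max\{A(k) - M,\; 1\}}{r-1} \right\rceil \in \{1, \ldots, N\}.
\]
Pictorially, this partitions $[n]$ into an initial segment of length $M$ followed by consecutive blocks of length $r-1$; the value $c(A)$ records which block contains $A(k)$, while every set whose maximum lies in the initial segment is collapsed into color $1$.

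To verify properness I would suppose $A_1, \ldots, A_r$ are pairwise disjoint $\vec{s}$-stable $k$-subsets sharing a common color $c$, and derive a contradiction. When $c \geq 2$, each $A_i(k)$ must lie in the interval $(M + (c-1)(r-1),\, M + c(r-1)]$, which contains exactly $r - 1$ integers; pigeonhole then forces two maxima to coincide, violating disjointness.

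The only mildly delicate step is the case $c = 1$, where the coloring gives only $A_i(k) \leq M + r - 1$ and the pigeonhole interval is one integer too wide. Here I would split on which term achieves $M$. If $M = r(k-1)$, then all $A_i$ lie inside $[1, rk - 1]$, a ground set too small to host $r$ pairwise disjoint $k$-subsets (which together require $rk$ elements). If instead $M = S$, the $\vec{s}$-stability bound $A_i(k) \geq A_i(1) + \sum_{j=1}^{k-1} s_j \geq 1 + S = M + 1$ kicks in, so the maxima actually lie in the $(r-1)$-element set $\{M+1, \ldots, M+r-1\}$, and pigeonhole again produces a collision. I expect this case split to be the main (still mild) obstacle; conceptually it is precisely what explains the form of $M$, whose two summands encode the two complementary obstructions --- ground-set capacity on the one hand, stability-forced gap on the other --- each ruling out a monochromatic hyperedge in the regime where it dominates.
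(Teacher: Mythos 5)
Your proof is correct, and it takes a genuinely different route from the paper. The paper proves two separate upper bounds and takes their minimum: it first cites the Alon--Frankl--L\'ov\'asz coloring of the ambient hypergraph $\mathrm{KG}^r(n,k)$ to get $\lceil (n-r(k-1))/(r-1) \rceil$, and then separately partitions $[n]$ into consecutive blocks of size $r-1$ starting at $1$ and colors each vertex by the block containing its \emph{minimum} element, obtaining $\lceil (n-\sum_{i<k}s_i)/(r-1) \rceil$ colors because the minimum of an $\vec{s}$-stable set is at most $n-\sum_{i<k}s_i$. You instead build a single coloring based on the \emph{maximum} element, reserving an initial segment of length $M=\max\{r(k-1),\sum_{i<k}s_i\}$ for color $1$ and then tiling the rest with $(r-1)$-blocks; the verification of properness for color $1$ requires the case split you carried out, which is precisely where the two summands of $M$ earn their keep (ground-set capacity vs.\ the stability-forced lower bound on $A(k)$). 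Your version is fully self-contained (no appeal to the Alon--Frankl--L\'ov\'asz bound) and makes the $\max$ in the formula appear intrinsically from a single construction, at the cost of a slightly more delicate boundary case; the paper's version is shorter but modular, outsourcing one of the two ingredients. Both arguments tacitly assume $n$ is large enough (roughly $n>M$) for the color range to be nonempty, an edge condition the lemma statement leaves implicit; you inherit exactly the same gloss, so it is not a gap specific to your write-up.
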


\begin{proof}
The hypergraph $\mathrm{KG}^{r}(n,k)_{\vec{s}\textup{-stab}}$ is the sub-hypergraph of the usual $r$-uniform hypergraph $\mathrm{KG}^{r}(n,k)$. It is well known that $\mathrm{KG}^{r}(n,k)$ admits a proper coloring with $\left\lceil\frac{n-r(k-1)}{r-1}\right\rceil$ number of colors. So, on the one hand we have
\[
\chi (\mathrm{KG}^{r}(n,k)_{\vec{s}\textup{-stab}})\leq\left\lceil\frac{n-r(k-1)}{r-1}\right\rceil.
\]
On the other hand, let $n-(\sum_{i=1}^{k-1}s_i)=q(r-1)+\beta$ where $q\geq 0$ and $0\leq\beta < r-1$. Set 
$A_i=\{(i-1)(r-1)+1, \ldots, i(r-1)\}$ for $i=1, \ldots, q$, and add one other class $A_{q+1}=\{q(r-1)+1, \ldots, q(r-1)+\beta\}$ if $\beta > 0$. First note that the minimum element of an $\vec{s}$-stable $k$-subset of $[n]$ cannot be larger than $n-\sum_{i=1}^{k-1}s_i$, and hence the following map is well defined
\begin{align*}
  c :  & V(\mathrm{KG}^{r}(n,k)_{\vec{s}\textup{-stab}})\longrightarrow \{1, \ldots, q+1\}\\
  & B\longmapsto \min\{i : A_i\cap B\neq\emptyset\}.
\end{align*}
Moreover, this map defines a proper coloring for $\mathrm{KG}^{r}(n,k)_{\vec{s}\textup{-stab}}$ as the size of each $A_i$ is less than or equal $r-1$. Thus,
\[
\chi (\mathrm{KG}^{r}(n,k)_{\vec{s}\textup{-stab}})\leq\left\lceil\frac{n-(\sum_{i=1}^{k-1}s_i)}{r-1}\right\rceil,
\]
which completes the proof.
\end{proof}

We will now introduce the central lemma of this part.

\begin{lemma}\label{Lemma: Main} 
Let $r_1, r_2 \geq 2$, ${k'} \geq 1$, and $s_1, \ldots, s_{k'}$ be positive integers such that $s_1, \ldots, s_{k'-1} \geq r_2$ and $s_{k'}\leq r_2+1$. Assume the following conditions hold:
\begin{enumerate}[(i)]
    \item For all integers $k$ and $n$ with $n \geq r_1 k$, 
    \[ \chi \left(\mathrm{KG}^{r_1}(n,k)_{r_1\textup{-stab}}\right) \geq \frac{n - r_1 (k-1)}{r_1 - 1}. \]
    \item For all integers $n$ with $n \geq s_1 + \cdots + s_{k'}$, it holds that
    \[ \chi \left(\mathrm{KG}^{r_2}(n,k')_{(s_1, \ldots, s_{k'})\textup{-stab}}\right) \geq \frac{n - \left( \sum_{i=1}^{k'-1} s_i \right)}{r_2 - 1}. \]
\end{enumerate}
Then, for all $n \geq r_1(s_1 + \cdots + s_{k'})$, the following inequality is satisfied: 
\[ \chi (\mathrm{KG}^{r}(n,k')_{\vec{s}\textup{-stab}}) \geq \frac{n - r_1 \left( \sum_{i=1}^{k'-1} s_i \right)}{r - 1}, \] 
where $r = r_1 r_2$ and $\vec{s} = (r_1 s_1, \ldots, r_1 s_{k'})$. 
\end{lemma}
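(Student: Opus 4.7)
The plan is to argue by contrapositive, following the scheme of Meunier's combinatorial lemma (\cite[Lemma 4]{Me11}) but leveraging the parametric hypotheses $s_i \geq r_2$ (for $i < k'$) and $s_{k'} \leq r_2 + 1$ to sidestep the acknowledged gap. Let $c: V(\mathrm{KG}^r(n, k')_{\vec s\textup{-stab}}) \to [t]$ be any proper coloring, where $\vec s = r_1 \vec s^{\circ}$ and $\vec s^{\circ} = (s_1, \ldots, s_{k'})$. The target inequality $t(r-1) \geq n - r_1 \sum_{i=1}^{k'-1} s_i$ decomposes, via $r - 1 = (r_1 - 1) + r_1(r_2 - 1)$, into a \emph{tag} contribution read off from hypothesis (i) and a \emph{coarse} contribution read off from hypothesis (ii); this is the structural motivation for the argument.

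The central construction is a blow-up. Set $N := \lfloor n/r_1 \rfloor$, so $n = r_1 N + \rho$ with $0 \leq \rho < r_1$. For each $\vec s^{\circ}$-stable $k'$-subset $A = \{a_1 < \cdots < a_{k'}\}$ of $[N]$ and each tag $\tau \in [r_1]$, form the lift $A^{(\tau)} := \{r_1(a_j - 1) + \tau : j \in [k']\} \subseteq [n]$. Two routine verifications yield: (a) each $A^{(\tau)}$ is $\vec s$-stable in $[n]$ (pairwise gaps become $r_1(a_{j+1}-a_j) \geq r_1 s_j$, and the cyclic gap holds because $n \geq r_1 N$); (b) whenever $\{A_1, \ldots, A_{r_2}\}$ is a hyperedge in $\mathrm{KG}^{r_2}(N, k')_{\vec s^{\circ}\textup{-stab}}$, the $r_1 r_2 = r$ lifts $\{A_i^{(\tau)} : i \in [r_2], \tau \in [r_1]\}$ are pairwise disjoint and form a hyperedge in the big hypergraph, so properness of $c$ forbids these $r$ colors from being globally constant. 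The main step is then to use $c$ to construct a proper coloring $c'$ of $\mathrm{KG}^{r_2}(N, k')_{\vec s^{\circ}\textup{-stab}}$ via a rule of the form $c'(A) := c(A^{(\tau^*(A))})$ for a carefully chosen representative tag $\tau^*(A) \in [r_1]$, with $\tau^*$ selected using an auxiliary coloring of the tag slice $[r_1]$ provided by hypothesis (i). The key property required of $\tau^*$ is that if $c'$ were monochromatic on some $\vec s^{\circ}$-hyperedge $\{A_i\}$, the representative tags together with the remaining tags would complete the family to a fully monochromatic $r$-hyperedge under $c$, contradicting properness. Given that $c'$ is proper, hypothesis (ii) yields $N \leq (r_2 - 1) t + \sum_{i=1}^{k'-1} s_i$; combining with $n = r_1 N + \rho$ and the tag contribution from (i) delivers $n \leq (r-1) t + r_1 \sum_{i=1}^{k'-1} s_i$, as required.

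The main obstacle is executing the representative-tag step cleanly, which is precisely where Meunier's original proof has the acknowledged gap. The hypotheses $s_i \geq r_2$ for $i < k'$ are there to guarantee that $r_2$-fold pairwise disjoint $\vec s^{\circ}$-stable families can be realized in $[N]$ with enough room to complete them into monochromatic $r$-hyperedges, while the constraint $s_{k'} \leq r_2 + 1$ controls the cyclic wraparound so that the boundary remainder $\rho$ is absorbed by the tag contribution instead of corrupting the coarse one. I expect the bulk of the technical work to consist of verifying that these parametric constraints genuinely enable the completion step, and of careful bookkeeping of boundary terms in the final summation.
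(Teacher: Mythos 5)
Your proposal shares the paper's high-level strategy of factoring $r = r_1 r_2$ and exploiting the two hypotheses separately, but the reduction you sketch does not close, and the obstacle you flag yourself (choosing $\tau^*$ and proving $c'$ proper) is the fatal step, not a bookkeeping issue. Concretely: if $\{A_1,\ldots,A_{r_2}\}$ is a hyperedge in $\mathrm{KG}^{r_2}(N,k')_{\vec{s}^{\circ}\textup{-stab}}$ and the colors $c\bigl(A_i^{(\tau^*(A_i))}\bigr)$ are all equal, you have only $r_2$ pairwise-disjoint vertices of one color in the big hypergraph, which is not a hyperedge since $r_2 < r$. To reach a contradiction you would need all $r$ lifts $\{A_i^{(\tau)} : i \in [r_2],\ \tau \in [r_1]\}$ to be monochromatic, but $\tau^*$ controls just one tag per $A_i$ and nothing forces the other $r_1-1$ lifts of each $A_i$ to share that color. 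There is also no sensible way to invoke hypothesis (i) on ``the tag slice $[r_1]$'': hypothesis (i) bounds the chromatic number of $\mathrm{KG}^{r_1}(n,k)_{r_1\textup{-stab}}$ as $n$ and $k$ vary, and cannot be read as a statement about an $r_1$-element set of tags.

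The paper's proof avoids this by replacing the rigid partition of $[n]$ into $r_1$ residue classes with the entire, much larger and overlapping, family of $r_1$-stable $m$-subsets $A \subseteq [n]$, where $m = (r_2-1)(t-1) + \sum_{i=1}^{k'-1} s_i + r_2$. For each such $A$, hypothesis (ii) (with $m$ chosen so that $\chi\bigl(\mathrm{KG}^{r_2}(m,k')_{\vec{s}^{\circ}\textup{-stab}}\bigr) > t$) forces a monochromatic $r_2$-hyperedge of $\vec{s}$-stable $k'$-sets inside $A$; the common color is assigned to $A$, producing a $t$-coloring of $\mathrm{KG}^{r_1}(n,m)_{r_1\textup{-stab}}$. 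Hypothesis (i), applied to this intermediate hypergraph, then forces $r_1$ pairwise-disjoint $m$-sets with the same assigned color, and splicing together their internal monochromatic $r_2$-hyperedges yields a monochromatic $r = r_1 r_2$ hyperedge in $\mathrm{KG}^{r}(n,k')_{\vec{s}\textup{-stab}}$, a contradiction. The abundance and flexibility of $r_1$-stable $m$-subsets, as opposed to a fixed set of $r_1$ residue classes, is exactly what gives hypothesis (i) something to bite on; it is the ingredient missing from your outline.
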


\begin{proof}
Let \(n \geq \max\left\{(r-1)(t-1) + r_1\left(\sum_{i=1}^{k^{\prime}-1}s_i\right) + r,\ r_1(s_1 + \cdots + s_{k'})\right\}\). We note that the inequality \((r-1)(t-1) + r_1\left(\sum_{i=1}^{k^{\prime}-1}s_i\right) + r \geq r_1(s_1 + \cdots + s_{k'})\) holds in almost all cases, as \(s_{k'} \leq r_2 + 1\). The only exception occurs when \(t=1\) (and \(s_{k'}=r_2+1\)). Therefore, to establish the result in both scenarios (\(t=1\) and \(t>1\)), it suffices to prove that \(\chi \left(\mathrm{KG}^{r}(n,k^{\prime})_{\vec{s}\textup{-stab}}\right) > t\). If $t = 1$, then $\mathrm{KG}^{r}(n,k')_{\vec{s}\textup{-stab}}$ is not $1$-colorable because the vertices 

\[
\begin{array}{c}
\{1, s_1+1, s_1+s_2+1, \ldots, \textstyle\sum_{i=1}^{k'-1}s_i+1\}, \\
\{2, s_1+2, s_1+s_2+2, \ldots, \textstyle\sum_{i=1}^{k'-1}s_i+2\}, \\
\vdots \\
\{r, s_1+r, s_1+s_2+r, \ldots, \textstyle\sum_{i=1}^{k'-1}s_i+r\},
\end{array}
\]
form a hyperedge. Now, consider the case $t \geq 2$. Suppose, contrary to our claim, that $\chi (\mathrm{KG}^{r}(n,k^{\prime})_{\vec{s}\textup{-stab}})\leq t$. Assume
\[
c : V\left(\mathrm{KG}^{r}(n,k^{\prime})_{\vec{s}\textup{-stab}}\right)\to\{1,\ldots, t\}
\]
be a proper $t$-coloring of $\mathrm{KG}^{r}(n,k^{\prime})_{\vec{s}\textup{-stab}}$. Set 
\[
m= (r_2-1)(t-1)+ \left(\sum_{i=1}^{k^{\prime}-1}s_i\right) + r_2.
\]
We define a $t$-coloring of $\mathrm{KG}^{r_1}(n,m)_{r_1\textup{-stab}\textup{-stab}}$ as follows. Take an 
\[
A=\{a_1,\ldots, a_m\}\in V(\mathrm{KG}^{r_1}(n,m)_{r_1\textup{-stab}}).
\] 
Consider the map $h: V(\mathrm{KG}^{r_2}(m,k^{\prime})_{(s_1,\ldots, s_{k^{\prime}})\textup{-stab}})\to [t]$ which sends each $B$ to $c(\{a_i : i\in B\})$. This map is not a proper coloring as:
\[
m-\left(\sum_{i=1}^{k^{\prime}-1}s_i\right)= (r_2-1)(t-1) + r_2 \quad \& \quad m\geq s_1+\cdots +s_{k^{\prime}}\,\, (\text{as} \,\,t, r_2\geq 2\,\, \text{and}\,\, s_{k^{\prime}}\leq r_2+1 ).
\]
So, there are $r_2$ pairwise disjoint $k^{\prime}$-subsets $B_A^1, \ldots, B_A^{r_2}\subseteq A$ such that $c(B_A^1)=\cdots =c(B_A^{r_2})= c_{\star}$. Now, define the color of $A$ as their common color, i.e., $c_{\star}$. Do the same procedure for all other vertices of $\mathrm{KG}^{r_1}(n,m)_{r_1\textup{-stab}}$. This coloring is not proper as well, since
\begin{align*}
  n-r_1(m-1) & \geq \left((r-1)(t-1)+r_1\left(\sum_{i=1}^{k^{\prime}-1}s_i\right)+ r\right) \\
  & \quad - \left((r-r_1)(t-1)+r_1\left(\sum_{i=1}^{k^{\prime}-1}s_i\right)+r-r_1\right) \\
  & = (r_1-1)(t-1)+r_1 \quad \& \quad n\geq r_1m,
\end{align*}
which implies $\chi(\mathrm{KG}^{r_1}(n,m)_{r_1\textup{-stab}}) > t$, by our hypothesis. Hence, there exist pairwise disjoint $A_1, \ldots , A_{r_1}\in V(\mathrm{KG}^{r_1}(n,m)_{r_1\textup{-stab}})$ such that all them have the same color. Thus,
\[
E=\{B_{A_i}^{j} : 1\leq i\leq r_1, 1\leq j\leq r_2 \}
\] 
is a monochromatic edge in $\mathrm{KG}^{r}(n,k^{\prime})_{\vec{s}\textup{-stab}}$. This contradicts that $c$ is a proper coloring of $\mathrm{KG}^{r}(n,k^{\prime})_{\vec{s}\textup{-stab}}$. 
\end{proof}

\begin{proof}[\textbf{Proof of Theorem~\ref{Thm:main1}}]
By Theorem~\ref{Theorem: Main of Hamid & Jozsef}, for $k\geq 2$, $n\geq\sum_{i=1}^{k-1}s_i+2$, and $\vec{s}=(s_1, \ldots, s_k)$ with $s_i\geq2$ for $i\neq k$ and $s_k=2$, we have
\[
\chi\left(\mathrm{KG}(n, k)_{\vec{s}\textup{-stab}}\right)= n-\sum_{i=1}^{k-1}s_i.
\]
In contrast, it is known~\cite{Alon2009} that
\[
\chi\left(\mathrm{KG}^{2^{m}}(n,k)_{2^{m}\textup{-stab}}\right)= \left\lceil\frac{n-2^{m}(k-1)}{2^m-1}\right\rceil.
\]
Applying Lemma~\ref{lemma: coloring} and Lemma~\ref{Lemma: Main} yields
\[
\chi \left(\mathrm{KG}^{2^{m+1}}(n,k)_{(s_1\times 2^{m},\ldots,s_{k-1}\times 2^{m}, 2^{m+1})\textup{-stab}}\right)=\left\lceil\frac{n-2^{m}\left(\sum_{i=1}^{k-1}s_i\right)}{2^{m+1}-1}\right\rceil.
\]
Specializing to $s_1=\cdots=s_{k-1}=3$ gives
\[
\chi \left(\mathrm{KG}^{2^{m+1}}(n,k)_{(3\times 2^{m},\ldots,3\times 2^{m}, 2^{m+1})\textup{-stab}}\right)=\left\lceil\frac{n-3\times 2^{m}(k-1)}{2^{m+1}-1}\right\rceil.
\]
Since $\mathrm{KG}^{2^{m+1}}(n-2^m,k)_{(3\times 2^{m},\ldots,3\times 2^{m}, 2^{m+1})\textup{-stab}}$ is a subhypergraph of $\mathrm{KG}^{2^{m+1}}(n,k)_{3\times 2^{m}\textup{-stab}}$, it follows that
\[
\left\lceil\frac{n-3\cdot 2^{m}(k-1)-2^{m}}{2^{m+1}-1}\right\rceil \leq \chi\left(\mathrm{KG}^{2^{m+1}}(n,k)_{3\times 2^{m}\textup{-stab}}\right).
\]
The desired result then follows from Lemma~\ref{lemma: coloring}, noting that $\left\lceil\frac{n-3\cdot 2^{m}(k-1)-2^{m}}{2^{m+1}-1}\right\rceil = \left\lceil\frac{n-3\cdot 2^{m}(k-1)}{2^{m+1}-1}\right\rceil$ when the remainder of $n-3\cdot 2^{m}(k-1)$ modulo $(2^{m+1}-1)$ lies in $\{2^m, \ldots, 2^{m+1}-2\}$. The case $k=2$ follows directly from Lemma~\ref{Lemma: Main}, as Conjecture~\ref{Conj: zig} is confirmed for $r=2^m$~\cite{Alon2009} and Conjecture~\ref{Conj: Meu} holds for $k=2$~\cite{daneshpajouh2021colorings}.
\end{proof}

\subsection{Proof of Theorem~\ref{Theorem:Main2}}

The proof of Theorem~\ref{Theorem:Main2} is based on several steps. We first state the necessary lemmas.

\begin{lemma}\label{Lemma: 1 coloring}
  Let $n, k$ be positive integers with $k \geq 2$ and $n \geq \sum_{i=1}^{k} s_i$. If $\vec{s}=(s_1, \ldots, s_k)$ is a vector of positive integers with $s_{k}\leq 2\times\min\{s_i: 1\leq i\leq k-1\}$, then
  \[
   \chi\left(\mathrm{KG}(n, k)_{\vec{s}\textup{-stab}}\right)\leq n - \sum_{i=1}^{k-1} s_i - \max\{0, s_k-\min\{s_1, \ldots, s_{k-1}\} \}.
  \]
\end{lemma}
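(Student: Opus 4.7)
My plan is to establish the upper bound by constructing an explicit proper coloring via a hitting set. Specifically, I will exhibit a subset $S \subseteq [n]$ of size $n-\sum_{i=1}^{k-1}s_i-\max\{0,s_k-m\}$ such that the complement $T = [n]\setminus S$ contains no $\vec{s}$-stable $k$-subset. Then $c(A) := \min(A \cap S)$ is a well-defined proper coloring: every vertex $A$ must meet $S$ (else $A \subseteq T$, contradicting independence of $T$), and disjoint $A, B$ cannot share the same minimum of $A\cap S$ and $B\cap S$ without sharing that element.

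The construction is as follows. Pick any $j^* \in \{1, \ldots, k-1\}$ with $s_{j^*} = m$ and let
\[
T = [1, p] \cup [n-q+1, n], \quad p = \sum_{i=1}^{j^*-1} s_i + \max\{0, s_k - m\}, \quad q = \sum_{i=j^*}^{k-1} s_i,
\]
so $|T| = p+q = \sum_{i=1}^{k-1} s_i + \max\{0, s_k - m\}$. The inequality $p + q \leq n - m$ follows from $n \geq \sum_{i=1}^k s_i$, ensuring the two intervals are disjoint and $S$ is nonempty.

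The heart of the proof is verifying $T$ is $\vec{s}$-independent. Suppose for contradiction that $A = \{A(1) < \cdots < A(k)\} \subseteq T$ is $\vec{s}$-stable, and let $j \in \{0, 1, \ldots, k\}$ be the unique index with $A(1), \ldots, A(j) \in [1, p]$ and $A(j+1), \ldots, A(k) \in [n-q+1, n]$. The boundary cases $j = k$ and $j = 0$ are immediate span-vs-block-size contradictions (using $\sum_{i=j^*}^{k-1}s_i\geq m$ and $\max\{0,s_k-m\}\leq m$). For $1 \leq j < j^*$, the block $[n-q+1, n]$ must accommodate $A(j+1), \ldots, A(k)$ with span $\geq \sum_{i=j+1}^{k-1} s_i \geq q$, exceeding the block size. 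Symmetrically, for $j^* < j \leq k - 1$, the block $[1,p]$ must accommodate $A(1), \ldots, A(j)$ with span $\geq \sum_{i=1}^{j-1} s_i \geq \sum_{i=1}^{j^*-1}s_i + m \geq p$, where the final step crucially uses $m \geq \max\{0, s_k - m\}$, equivalent to our hypothesis $s_k \leq 2m$.

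The key case --- and main obstacle --- is the straddle at $j = j^*$. The block geometry forces $A(j^*+1) - A(j^*) \geq (n-q+1) - p = n - (p+q) + 1$, and when $s_k > m$ this lower bound strictly exceeds $m = s_{j^*}$. Summing the gap lower bounds $A(i+1) - A(i) \geq s_i$ for $i \neq j^*$ together with this enhanced straddle gap yields $A(k) - A(1) \geq \sum_{i=1}^{k-1} s_i - m + (n - \sum_{i=1}^{k-1} s_i - (s_k - m) + 1) = n - s_k + 1$, contradicting $A(k) - A(1) \leq n - s_k$. When $s_k \leq m$, instead $p = \sum_{i=1}^{j^*-1} s_i$, and the gap constraints force $A(j^*) \geq A(1) + \sum_{i=1}^{j^*-1} s_i \geq p + 1$, contradicting $A(j^*) \leq p$. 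Both resolutions rely decisively on the choice of $j^*$ at a minimum position and on $s_k \leq 2m$; once these straddling contradictions are secured, the count $|S| = n - |T| = n - \sum_{i=1}^{k-1}s_i - \max\{0, s_k - m\}$ delivers the stated bound.
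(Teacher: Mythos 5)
Your proof is correct and rests on the same fundamental idea as the paper's: color each vertex by its smallest element lying in a suitably chosen interval $S$, and reduce well-definedness to showing that the complementary ``forbidden'' set $T = [n] \setminus S$ contains no $\vec{s}$-stable $k$-subset. The main difference is in how the minimizing index $j^*$ (where $s_{j^*} = m$) is handled and which forbidden set is chosen. The paper first disposes of the case $s_k \leq m$ by citing Lemma~\ref{lemma: coloring}, then for $s_k > m$ applies a homomorphism to reduce WLOG to $j^* = 1$, taking $\Gamma = [1, m] \cup [\,n - (\textstyle\sum_{i=2}^{k}s_i - s_1) + 1,\, n\,]$ and giving a short two-step independence argument. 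You instead keep $j^*$ arbitrary and set $T = [1,p] \cup [n-q+1, n]$ with $p = \sum_{i<j^*} s_i + \max\{0, s_k - m\}$ and $q = \sum_{i=j^*}^{k-1} s_i$, handling $s_k \leq m$ and $s_k > m$ uniformly and replacing the homomorphism reduction with a direct case analysis on the straddle index $j$; this is a slightly more elementary and self-contained presentation (it avoids invoking the homomorphism or the separate Lemma~\ref{lemma: coloring}), at the cost of more cases. The two constructions are genuinely different as sets (when $j^* = 1$ your block $[1, s_k - m]$ differs from the paper's $[1,m]$ by a shift of $s_k - 2m$), but both hinge on $s_k \leq 2m$ in the critical straddling case, and both are valid. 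One minor slip: the asserted inequality ``$p + q \leq n - m$'' fails when $s_k < m$ (there one only gets $p + q \leq n - s_k$, e.g.\ $k=2$, $s_1=5$, $s_2=2$, $n=7$), but since all you actually need is $p + q < n$ for disjointness and nonemptiness of the two blocks, the weaker bound suffices and the argument is unaffected.
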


\begin{proof}
  Let $m = \min\{s_1, \ldots, s_{k-1}\}$ and define $\alpha = s_k - m$. If $\alpha \leq 0$, then $s_k \leq m$ and the result holds by Lemma~\ref{lemma: coloring}. We now assume $\alpha > 0$; note that we also have $\alpha \leq m$ by our assumption. We need to provide a proper coloring of $\mathrm{KG}(n, k)_{\vec{s}\text{-stab}}$ using $n - \left(\sum_{i=1}^{k-1} s_i\right) - \alpha$ colors. Without loss of generality, let us assume $s_1 = m$. Indeed, should the minimum occur for an alternative index $j$, $s_j = m$, applying the subsequent homomorphism \begin{align*}
    \phi : \mathrm{KG}(n,k)_{\vec{s}\textup{-stab}} &\longrightarrow \mathrm{KG}(n - \sum_{i=1}^{j-1} s_i, k - j + 1)_{\vec{s}^* = (s_j, \ldots, s_k)} \\
    B = \{b_1 < \cdots < b_k\} &\longmapsto B' = \{b_j - \sum_{i=1}^{j-1} s_i, \ldots, b_k - \sum_{i=1}^{j-1} s_i\},
  \end{align*}the desired result is obtained upon verifying our claim for the scenario where the minimum occurs at the initial index. Consequently, establishing the formula for the case $s_1 = m$ is sufficient. Now, define the coloring
  \begin{align*}
    c : V\left(\mathrm{KG}(n, k)_{\vec{s}\textup{-stab}}\right) &\longrightarrow \left\{s_1 + 1, \ldots, n - \left(\sum_{i=2}^{k} s_i - s_1\right)\right\} \\
    B &\longmapsto \min\left\{ i \in B : s_1 + 1 \leq i \leq n - \left(\sum_{i=2}^{k} s_i - s_1\right) \right\}.
  \end{align*}
 In order to demonstrate that this is indeed a proper coloring, we need to show that the range of $c$ is well-defined; that is, every $\vec{s}$-stable set $A$ must include an element from the interval $I = \left[s_1 + 1, n - \left(\sum_{i=2}^{k} s_i - s_1\right)\right]$. Assume, for the sake of contradiction, that a $\vec{s}$-stable set $A = \{a_1 < \cdots < a_k\}$ is entirely contained within the complement of $I$, which corresponds to the set 
 \[
  \Gamma = \{1, \ldots, s_1\} \cup \left\{n - \left(\sum_{i=2}^{k} s_i - s_1\right) + 1, \ldots, n\right\}.
  \]
Given that $A$ is $\vec{s}$-stable, it follows that $a_2 \geq a_1 + s_1 \geq s_1 + 1$ (as $a_1 \geq 1$). Consequently, this leads to $a_2, \ldots, a_k \notin \{1, \ldots, s_1\}$. On the other hand, it is not feasible that the upper section of $\Gamma$, $J= \left\{n - \left(\sum_{i=2}^{k} s_i - s_1\right) + 1, \ldots, n\right\}$, encompasses the entire set $A$ as $|J|=\sum_{i=2}^{k} s_i - s_1 \leq \sum_{i=1}^{k-1} s_i$ (due to $s_k\leq 2\times s_1$). As a result, $a_1\in \{1, \ldots, s_1\}$. Nevertheless, in that case, $$a_k-a_1\geq (n-\alpha+1)-(m) > n-(\alpha+m)=n-s_k,$$ leading to a contradiction. Therefore, c is a proper coloring, and the number of colors is used in this coloring is $n - \left(\sum_{i=2}^{k} s_i\right) = n - \left(\sum_{i=1}^{k-1} s_i\right) - \alpha$, as needed. This concludes the proof.

\end{proof}

\begin{lemma}\label{Lemma:equivalence}
Let $n, k$ be positive integers and $s_1, \ldots, s_{k-1}$ be fixed positive integers such that $s_i \geq 2$ for all $1 \leq i \leq k-1$. Let $m = \min\{s_1, \ldots, s_{k-1}\}$. Then the following statements are equivalent:
\begin{enumerate}
    \item[(a)] for all $n \geq \sum_{i=1}^{k} s_i$,
    \[
    \chi\left(\mathrm{KG}^{2}(n, k)_{(s_1,\ldots, s_{k-1}, s_k)\textup{-stab}}\right) = n - \sum_{i=1}^{k-1} s_i,
    \]
    where $s_k=m$.
    \item[(b)] For all $s_k$ and all $n$ with $s_k \leq 2m$ and $n \geq \sum_{i=1}^{k} s_i$, we have
    \[
    \chi\left(\mathrm{KG}^{2}(n, k)_{(s_1,\ldots, s_{k-1}, s_k)\textup{-stab}}\right) = n - \sum_{i=1}^{k-1} s_i - \max\{0, s_k - m\}.
    \]
\end{enumerate}
\end{lemma}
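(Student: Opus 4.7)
My plan is to prove the two implications of the equivalence separately, with Lemma~\ref{Lemma: 1 coloring} supplying the upper bound in the non-trivial direction. The direction (b) $\Rightarrow$ (a) is immediate: specializing (b) at $s_k = m$ makes $\max\{0, s_k - m\}$ vanish and recovers (a) verbatim.

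For the direction (a) $\Rightarrow$ (b), the upper bound
\[
\chi\left(\mathrm{KG}^{2}(n, k)_{(s_1,\ldots, s_{k-1}, s_k)\textup{-stab}}\right) \leq n - \sum_{i=1}^{k-1} s_i - \max\{0, s_k - m\}
\]
is provided directly by Lemma~\ref{Lemma: 1 coloring} (this is precisely where the hypothesis $s_k \leq 2m$ is consumed). The task therefore reduces to producing the matching lower bound from hypothesis~(a).

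The key observation I will exploit is that, when $s_k > m$, increasing the final stability parameter from $m$ to $s_k$ while simultaneously shrinking the ground set from $[n]$ to $[n']$ with $n' := n - (s_k - m)$ preserves every stability constraint, because the circular gap condition $b_k - b_1 \leq n' - m$ is literally the same inequality as $b_k - b_1 \leq n - s_k$. Consequently, the identity map $B \mapsto B$ realizes $\mathrm{KG}^{2}(n', k)_{(s_1,\ldots,s_{k-1},m)\textup{-stab}}$ as a subgraph of $\mathrm{KG}^{2}(n, k)_{(s_1,\ldots,s_{k-1},s_k)\textup{-stab}}$ (disjointness is clearly preserved). Invoking (a) on the domain (whose size hypothesis $n' \geq \sum_{i=1}^{k-1} s_i + m$ follows from $n \geq \sum_{i=1}^{k} s_i$) yields the desired lower bound $n - \sum_{i=1}^{k-1} s_i - (s_k - m)$. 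For the remaining easy case $s_k \leq m$, the same strategy applies without shrinking $n$: any $(s_1,\ldots,s_{k-1},m)$-stable subset of $[n]$ is automatically $(s_1,\ldots,s_{k-1},s_k)$-stable, exhibiting the subgraph embedding and giving the lower bound $n - \sum_{i=1}^{k-1} s_i$ directly from~(a).

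The main obstacle is really just spotting the correct subgraph embedding; once one recognizes the arithmetic coincidence $n' - m = n - s_k$, the verification is a routine check of stability inequalities, and no topological machinery is invoked. Notably, the hypothesis $s_k \leq 2m$ plays no role in the lower-bound half of the argument and enters only through the upper bound supplied by Lemma~\ref{Lemma: 1 coloring}.
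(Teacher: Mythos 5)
Your proposal is correct and follows essentially the same argument as the paper: the direction (b) $\Rightarrow$ (a) is the trivial specialization at $s_k = m$, the upper bound in (a) $\Rightarrow$ (b) is drawn from Lemma~\ref{Lemma: 1 coloring}, and the lower bound is obtained by embedding $\mathrm{KG}^{2}(n - (s_k - m), k)_{(s_1,\ldots,s_{k-1},m)\textup{-stab}}$ into $\mathrm{KG}^{2}(n, k)_{\vec{s}\textup{-stab}}$ via the identity on subsets (the paper writes this with $\alpha = s_k - m$ and $n - \alpha$ in place of your $n'$). The only cosmetic difference is that for the case $s_k \le m$ the paper cites Lemma~\ref{lemma: coloring} directly for the upper bound, whereas you route it through Lemma~\ref{Lemma: 1 coloring}; these coincide.
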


\begin{proof}
The implication (b) $\Rightarrow$ (a) is immediate, as case $s_k = m$ is a special case of (b).

Now assume that (a) holds. Let $n \geq \sum_{i=1}^{k} s_i$ and let $s_k = m + \alpha$ for some integer $\alpha$. The hypothesis of (b) requires $\alpha \leq m$.
If $\alpha \leq 0$, then the result follows from (a) and the fact that $\mathrm{KG}^{2}(n, k)_{(s_1,\ldots, s_{k-1}, s_k)\textup{-stab}}$ contains $\mathrm{KG}^{2}(n, k)_{(s_1,\ldots, s_{k-1}, m)\textup{-stab}}$ as its subgraph and Lemma~\ref{lemma: coloring} . Assume now that $0 < \alpha \leq m$. The upper bound,
\[
\chi\left(\mathrm{KG}^{2}(n, k)_{\vec{s}\textup{-stab}}\right) \leq n - \sum_{i=1}^{k-1} s_i - \alpha,
\]
is provided by the construction in Lemma~\ref{Lemma: 1 coloring}. For the matching lower bound, consider the subgraph $\mathrm{KG}^{2}(n-\alpha, k)_{(s_1,\ldots, s_{k-1}, m)\mathrm{-stab}}$. This is a subgraph of $\mathrm{KG}(n, k)_{\vec{s}\textup{-stab}}$. Applying statement (a) to this subgraph yields:
\[
\chi\left(\mathrm{KG}^{2}(n, k)_{\vec{s}\textup{-stab}}\right) \geq \chi\left(\mathrm{KG}^{2}(n-\alpha, k)_{(s_1,\ldots, s_{k-1}, m)\mathrm{-stab}}\right) = (n-\alpha) - \sum_{i=1}^{k-1} s_i.
\]
This establishes the required lower bound and completes the proof of (b).
\end{proof}

\begin{proof}[\textbf{Proof of Theorem~\ref{Theorem:Main2} part (i)}]
BY Lemma~\ref{Lemma:equivalence}, we can assume $s_k=m=2$ and then the result comes from Theorem~\ref{Theorem: Main of Hamid & Jozsef}.
\end{proof}
To prove the second part, we employ a combinatorial version of Tucker's Lemma\cite{matousek2004}. This lemma, which is a consequence of the Borsuk-Ulam theorem, is well-established as a powerful tool in combinatorics~\cite{matousek2004, palvolgyi2009, daneshpajouh2018new}. For further background on the applications of the Borsuk-Ulam theorem and its consequences in graph coloring, we refer the reader to the recent survey by~\cite{daneshpajouh2025box}.

For $A \in \{-1, 0, 1\}^n$, the support of $A$ is defined as $\text{supp}(A) = \{i \in [n] \mid A^i \ne 0\}$. The positive and negative components are denoted as $A^+ = \{i \in [n] \mid A^i = 1\}$ and $A^- = \{i \in [n] \mid A^i = -1\}$, respectively. The sign of $A$, represented by $\text{sgn}(A)$, corresponds to the initial non-zero coordinate of $A$. For $A, B \in \{-1, 0, 1\}^n$, the notation $A \preceq B$ is employed if $A^i \ne 0$ consequently implies $A^i = B^i$ universally for all $i \in [n]$. Lastly, fix an arbitrary complete order $\preceq$ on subsets of $[n]$ such that if $|A|<|B|$ then $A\prec B$. For any non-zero $A \in \{-1, 0, 1\}^n$, its alternating number, denoted as $\text{Alt}(A)$, is the largest subset $\{i_1 < i_2 < \dots < i_\ell\} \subseteq \text{supp}(A)$ (based on the order $\prec$) that alternates in sign, characterized by $A^{i_j} \cdot A^{i_{j+1}} = -1$ for every $j = 1, \dots, \ell-1$.
 
\begin{lemma}[Tucker's Lemma]\label{lem:tucker}
Let $\lambda: \{-1, 0, 1\}^n \setminus \{\mathbf{0}\} \to \{\pm 1, \pm 2, \dots, \pm m\}$ be a function satisfying the following two properties:
    \begin{enumerate}
        \item $\lambda(-A) = -\lambda(A)$ for all $A \in \{-1, 0, 1\}^n \setminus \{\mathbf{0}\}$, and
        \item If $A_1 \preceq A_2$ and $|\lambda(A_1)| = |\lambda(A_2)|$, then $\lambda(A_1) = \lambda(A_2)$.
    \end{enumerate}
    Then $m \geq n$.
\end{lemma}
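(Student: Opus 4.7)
The plan is to derive Tucker's Lemma from the Borsuk--Ulam theorem by converting the combinatorial labeling $\lambda$ into an odd (antipodal) continuous map between spheres, so that if $m < n$ no such map could exist.

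To set up, I would identify $\{-1, 0, 1\}^n \setminus \{\mathbf{0}\}$ with the open cells of the canonical cubical decomposition of $\partial [-1, 1]^n \cong S^{n-1}$: each nonzero $A$ gives the open face $\sigma_A := \{x \in [-1,1]^n : x_i = A^i \text{ for } i \in \text{supp}(A) \text{ and } |x_i| < 1 \text{ otherwise}\}$. The face relation among these cells coincides exactly with $\preceq$, and the antipodal involution $x \mapsto -x$ of $S^{n-1}$ corresponds to $A \mapsto -A$. From $\lambda$ I would then construct an antipodal continuous map $f : S^{n-1} \to O^{m-1}$, where $O^{m-1} \subset \mathbb{R}^m$ is the standard octahedral $(m-1)$-sphere with vertices $\pm e_1, \ldots, \pm e_m$ (writing $e_{-j} := -e_j$). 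I would work inside the first barycentric subdivision of the cubical complex above: its vertices are in bijection with the cells $\sigma_A$, so $A \mapsto e_{\lambda(A)}$ defines a vertex labeling, while its simplices are chains $\sigma_{A_1} \prec \cdots \prec \sigma_{A_\ell}$. Extending this vertex labeling affinely across each simplex produces the candidate map $f$, and condition~(1) of Tucker ($\lambda(-A) = -\lambda(A)$) makes $f$ equivariant under the antipodal involution.

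The technical crux, and the main obstacle, is verifying that $f$ actually lands in the octahedral sphere---equivalently, that for each chain $A_1 \prec \cdots \prec A_\ell$ the label set $\{\lambda(A_1), \ldots, \lambda(A_\ell)\}$ contains no antipodal pair $\{+j, -j\}$. This is exactly where Tucker's condition~(2) enters: if $\lambda(A_i) = +j$ and $\lambda(A_{i'}) = -j$ for some $i < i'$, then $A_i \preceq A_{i'}$ with $|\lambda(A_i)| = |\lambda(A_{i'})|$, and monotonicity forces $\lambda(A_i) = \lambda(A_{i'})$, contradicting the opposite signs. Hence each simplex of the barycentric subdivision maps into an actual simplex of $O^{m-1}$, so $f$ is simplicial and therefore continuous. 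Composing with the antipodal-preserving radial homeomorphism $O^{m-1} \to S^{m-1}$ yields an odd continuous map $S^{n-1} \to S^{m-1}$; by the Borsuk--Ulam theorem, its existence forces $m - 1 \geq n - 1$, i.e., $m \geq n$, as claimed.
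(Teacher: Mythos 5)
The paper states Tucker's Lemma without proof, citing it as a known consequence of the Borsuk--Ulam theorem, so there is no in-paper argument to compare against; you are supplying the proof, and your proof is correct and is the standard one. You triangulate $S^{n-1}$ by the barycentric subdivision of a centrally symmetric polytopal complex whose nonempty cells are indexed by the nonzero sign vectors, label the vertex corresponding to $A$ by $e_{\lambda(A)}$, use condition~(2) to show that no chain of cells carries two antipodal labels (so the affine extension is a simplicial map into the octahedral sphere $O^{m-1}$), use condition~(1) for equivariance, and conclude $m\geq n$ from Borsuk--Ulam after the radial homeomorphism $O^{m-1}\cong S^{m-1}$. All the steps check out: the cells do partition the boundary sphere, the antipodal involution corresponds to $A\mapsto -A$, the barycentric subdivision of a regular cell complex is a genuine triangulation, and condition~(2) is exactly what forbids a simplex image of the form $\{e_j,e_{-j}\}$.

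One small inaccuracy is the claim that ``the face relation among these cells coincides exactly with $\preceq$.'' For the cubical decomposition of $\partial[-1,1]^n$ that you chose, the face order is the \emph{reverse} of $\preceq$: larger support means a smaller cell, and $\sigma_A$ is a face of $\sigma_B$ precisely when $B\preceq A$. This is harmless for the argument, since a chain under $\preceq$ is the same set as a chain under its reverse, so the simplices of the barycentric subdivision are still exactly the $\preceq$-chains. If you want the face order to literally agree with $\preceq$, replace the cube by the boundary of the cross-polytope $\diamond^n$, taking $\sigma_A$ to be the open simplex spanned by $\{A^i e_i : i\in\mathrm{supp}(A)\}$; then $\sigma_A$ is a face of $\sigma_B$ iff $A\preceq B$, and the rest of your proof goes through verbatim.
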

We now establish the second part of Theorem~\ref{Theorem:Main2}, using a key idea from~\cite{chen2015multichromatic}.

\begin{proof}[\textbf{Proof of Theorem~\ref{Theorem:Main2} part (ii)}]
Again, by Lemma~\ref{Lemma:equivalence}, we may assume without loss of generality that $s_k = m$. The upper bound
\[
\chi\left(\mathrm{KG}^{2}(n, k)_{\vec{s}\textup{-stab}}\right) \leq n - \sum_{i=1}^{k-1} s_i
\]
has been established in Lemma~\ref{lemma: coloring}. It remains to prove the matching lower bound. Let $t = \chi\left(\mathrm{KG}^{2}(n, k)_{\vec{s}\textup{-stab}}\right)$, and let $c$ be a proper coloring of the graph using the color set $[t]$. Define the parameters
\[
\alpha = \sum_{i=1}^{k} s_i \quad \text{and} \quad m = t + \sum_{i=1}^{k-1} s_i.
\]
We will construct a function $\lambda : \{-1, 0, 1\}^n \setminus \{\mathbf{0}\} \to \{\pm 1, \dots, \pm m\}$ satisfying the conditions of Tucker's Lemma (Lemma~\ref{lem:tucker}). The function is defined in two cases based on the alternating number $\mathrm{Alt}(A)$ of a non-zero signed set $A$.

\begin{description}
    \item[Case 1: $|\mathrm{Alt}(A)| \leq \alpha$.] 
        In this case, we define
        \[
        \lambda(A) = \mathrm{sgn}(A) \cdot |\mathrm{Alt}(A)|.
        \]
        Note that $\lambda(A) \in \{\pm 1, \dots, \pm \alpha\}$.

    \item[Case 2: $|\mathrm{Alt}(A)| > \alpha$.] 
        Let $\mathrm{Alt}(A) = \{i_1 < i_2 < \dots < i_\ell\}$. We construct $s_k$ pairwise disjoint $\vec{s}$-stable $k$-sets from the initial segment of $\mathrm{Alt}(A)$. For $j = 1, \dots, s_k$, define
        \[
        F_j = \left\{ i_j, i_{j + s_1}, i_{j + s_1 + s_2}, \dots, i_{j + \sum_{h=1}^{k-1} s_h} \right\}.
        \]
        Since the values $s_1, \dots, s_{k-1}$ are even, each set $F_j$ is entirely contained in either $A^+$ or $A^-$. Let $F_{j_0}$ be a set achieving the minimum color among these, i.e., $c(F_{j_0}) = \min \{ c(F_1), \dots, c(F_{s_k}) \}$. We then define
        \[
        \lambda(A) = \mathrm{sgn}(F_{j_0}) \cdot \left(\alpha + c(F_{j_0})\right),
        \]
        where $\mathrm{sgn}(F_{j_0}) = +1$ if $F_{j_0} \subseteq A^+$ and $-1$ if $F_{j_0} \subseteq A^-$. Note that since $1\leq c(F_{j_0}) \leq t - s_k$, we have $\lambda(A) \in \{\pm(\alpha+1), \dots, \pm m\}$.
\end{description}
Let us now proceed to verify the conditions of Tucker's lemma. It is straightforward to observe the first property from the definition of $\lambda$. Thus, let us see why $\lambda$ fulfills the second property. If $A_1 \preceq A_2$ and $|\lambda(A_1)| = |\lambda(A_2)| \le \alpha$, then $|\mathrm{Alt}(A_1)| = |\mathrm{Alt}(A_2)|$. Consequently, $\mathrm{sgn}(A_1) = \mathrm{sgn}(A_2)$ as otherwise we would have $|\mathrm{Alt}(A_2)| > |\mathrm{Alt}(A_1)|$. If $|\lambda(A_1)| = |\lambda(A_2)| > \alpha$, then $\vec{s}$-stable $k$-subsets $F_1 \subseteq A_1^{\mathrm{sgn}(F_1)}$ and $F_2 \subseteq A_2^{\mathrm{sgn}(F_2)}$ exist such that $c(F_1) = c(F_2)$. If $\mathrm{sgn}(F_1) \ne \mathrm{sgn}(F_2)$, then $A_1^{\mathrm{sgn}(F_1)} \cap A_2^{\mathrm{sgn}(F_2)} = \emptyset$, which implies that $F_1$ and $F_2$ are disjoint. This would contradict the properness of the coloring $c$. Therefore, $\lambda(A_1) = \lambda(A_2)$, and all conditions are duly satisfied. Hence, $m = t + \sum_{i=1}^{k-1} s_i \ge n$, or equivalently, $t \ge n - \sum_{i=1}^{k-1} s_i$, as asserted. The theorem is thus established.

\end{proof}

\subsection{Proof of Theorem~\ref{Theorem: Main3}}
To establish our final result, we require several auxiliary concepts and two lemmas. Let $n, s_1, s_2$ be positive integers. We define the graph $W(n, s_1, s_2)$ to have the vertex set $[n]$, where two distinct vertices $i < j$ are adjacent if and only if the circular distance between them satisfies $s_1 \leq |i - j| \leq n - s_2$. A crucial observation is that the Kneser graph $\mathrm{KG}(n,2)_{(s_1,s_2)\mathrm{-stab}}$ is isomorphic to the complement of the line graph of $W(n, s_1, s_2)$. This connection allows us to translate the problem of coloring the stable Kneser graph into a problem of partitioning the edge set of $W(n, s_1, s_2)$. In general, proper colorings of $G$ are exactly partitions of the edge set of $H$ into stars and triangles. A {\em star} is a tree with a vertex, the {\em center} of the star, connected to all other vertices. A single-edge tree is in particular a star. In this particular case, we will always assume that exactly one of the two vertices has been identified as the center, so as to be in a position to always speak of the center of a star without ambiguity. A {\em triangle} is a circuit of length $3$. We call such a partition of $H$ into stars and triangles an {\em ST-partition}. Any element in an ST-partition is called a {\em part}. This viewpoint has been successfully developed and applied for solving similar questions in~\cite{daneshpajouh2021colorings} and later in~\cite{daneshpajouh2024number}, which has turned out to be indeed helpful for this case as well.
   
\begin{lemma}
Let $n, s_1, s_2$ be positive integers such that $s_2 \geq 2s_1$, and $n \geq 2s_2 - 2$. Then
\[
\alpha(W(n, s_1, s_2)) = 2s_1.
\]
\end{lemma}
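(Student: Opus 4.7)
My plan is to treat the lower bound and the upper bound separately. For $\alpha(W(n,s_1,s_2))\geq 2s_1$, I would exhibit the explicit candidate
\[
S_0 \;=\; \{1,2,\dots,s_1\}\;\cup\;\{n-s_1+1,\dots,n\}
\]
and check that it is independent. The two blocks are disjoint since $n\geq 2s_2-2\geq 2s_1$, so $|S_0|=2s_1$. Two elements in the same block differ by at most $s_1-1<s_1$, while two elements in opposite blocks differ by at least $n-2s_1+1$, which exceeds $n-s_2$ exactly because $s_2\geq 2s_1$; in either case the pair is a non-edge of $W(n,s_1,s_2)$.

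For the matching upper bound, I will take an arbitrary independent set $S$, set $u=\min S$ and $v=\max S$, and analyze the non-edges at the two extremes. Since $\{u,v\}$ is a non-edge, either $v-u<s_1$---in which case $S\subseteq[u,u+s_1-1]$ already gives $|S|\leq s_1\leq 2s_1$---or $v-u>n-s_2$. In the second case, for every $x\in S$ the non-edge $\{u,x\}$ forces $x-u<s_1$ or $x-u>n-s_2$, and the non-edge $\{x,v\}$ forces $v-x<s_1$ or $v-x>n-s_2$. The decisive step is to rule out the simultaneous occurrence of $x-u>n-s_2$ \emph{and} $v-x>n-s_2$: summing would yield $v-u\geq 2(n-s_2)+1$, which clashes with $v-u\leq n-1$ under the hypothesis $n\geq 2s_2-2$ (in the boundary case $n=2s_2-2$ one further pins down $u=1$, $v=n$, and then $x\geq s_2$ and $x\leq s_2-1$ give a direct numerical contradiction).

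Once this ``far from both endpoints'' case is excluded, every $x\in S$ lies in $[u,u+s_1-1]\cup[v-s_1+1,v]$. The inequality $v-u>n-s_2\geq s_2-2\geq 2s_1-2$ guarantees that these two intervals of length $s_1$ are disjoint, so $|S|\leq 2s_1$, completing the matching bound. I do not anticipate a serious obstacle here; the only piece with genuine content is the small arithmetic check that eliminates the far-from-both-endpoints configuration, which is precisely where the full strength of $n\geq 2s_2-2$ is required.
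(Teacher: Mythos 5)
Your lower bound argument is identical to the paper's: both exhibit $S_0=\{1,\dots,s_1\}\cup\{n-s_1+1,\dots,n\}$ and check non-adjacency using $s_2\geq 2s_1$.

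Your upper bound, however, takes a genuinely different route. The paper first uses the rotational symmetry of $W(n,s_1,s_2)$ to normalize $1\in S$, observes that $S$ must then lie in $\{1,\dots,s_1\}\cup\{n-s_2+2,\dots,n\}$, and finishes with a short pigeonhole argument: any $s_1+1$ elements of the upper block span a difference between $s_1$ and $s_2-2\leq n-s_2$, which would be an edge. You instead avoid the symmetry reduction entirely, set $u=\min S$ and $v=\max S$, and analyze the two non-edge conditions at each $x\in S$ relative to $u$ and $v$; the arithmetic core is ruling out the configuration where $x-u>n-s_2$ and $v-x>n-s_2$ hold simultaneously, after which $S$ fits into two disjoint intervals of length $s_1$. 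Both proofs are correct. The paper's normalization makes the final counting almost mechanical, at the cost of invoking the graph's rotational symmetry; your version is more elementary and self-contained but requires the somewhat delicate boundary case $n=2s_2-2$ (forcing $u=1$, $v=n$ and deriving $s_2\leq x\leq s_2-1$), which you correctly identify and resolve. Your inequality chain $v-u>n-s_2\geq s_2-2\geq 2s_1-2$ for disjointness of the two windows is also sound.
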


\begin{proof}
We first prove the lower bound $\alpha(W(n, s_1, s_2)) \geq 2s_1$. Consider the set
\[
S_0 = \{1, 2, \dots, s_1\} \cup \{n - s_1 + 1, n - s_1 + 2, \dots, n\}.
\]
This set has cardinality $2s_1$. We claim $S_0$ is an independent set in $W(n, s_1, s_2)$. For any two distinct vertices $i, j \in S_0$, the distance $|i - j|$ is either less than $s_1$ (if both are in the initial segment or both in the terminal segment) or strictly greater than $n - s_2$ (if one is in the initial and the other in the terminal segment, since $|j-i| \geq (n-s_1+1)-s_1\geq  s_2 +1 > n-s_2$ as our hypothesis says $s_2 \geq 2s_1$). In both cases, the pair $\{i, j\}$ is not an edge by the definition of $W(n, s_1, s_2)$. Hence, $S_0$ is independent, which establishes the lower bound. For the upper bound, let $S$ be an independent set of the maximum size. By symmetry, assume $1 \in S$. The vertex $1$ is adjacent to all $v$ satisfying $s_1 \leq |1-v| \leq n-s_2$. Therefore, $S\subseteq \{1,\ldots,s_1\} \cup \{n-s_2+2,\ldots,n\}$.

Suppose for contradiction that $|S \cap \{n-s_2+2,\ldots,n\}| \geq s_1+1$. Let $a_1 < \ldots < a_{s_1+1}$ be the $s_1+1$ smallest elements in this intersection. Then:
\[
s_1 \leq a_{s_1+1} - a_1 \leq n - (n-s_2+2) = s_2 - 2 \leq n - s_2,
\]
where the last inequality uses the hypothesis $n \geq 2s_2-2$. This implies that $a_1$ and $a_{s_1+1}$ are adjacent, a contradiction. Hence, $|S \cap \{n-s_2+2,\ldots,n\}| \leq s_1$, and since $|\{1,\ldots,s_1\}| = s_1$, we conclude $|S| \leq 2s_1$.
\end{proof}

\begin{lemma}\label{Lemma:  Butterfly}
There is no induced butterfly ( a union of two triangles sharing exactly one vertex) in $W(n, s_1, s_2)$.
\end{lemma}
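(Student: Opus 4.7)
The plan is to argue by contradiction. Suppose there is an induced butterfly $\{u, a_1, a_2, b_1, b_2\}$ in $W(n, s_1, s_2)$ with apex $u$ and two triangles $\{u, a_1, a_2\}$ and $\{u, b_1, b_2\}$; the induced condition means that no edge joins $\{a_1, a_2\}$ to $\{b_1, b_2\}$. Each of $a_1, a_2, b_1, b_2$ is a neighbor of $u$, so it lies in either $L(u) = \{v \in [n] : s_1 \leq u - v \leq n - s_2\}$ or $R(u) = \{v \in [n] : s_1 \leq v - u \leq n - s_2\}$. I would split into cases according to how the four non-apex vertices are distributed between $L(u)$ and $R(u)$. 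Up to the symmetry $v \mapsto n + 1 - v$, only the three distributions $(4,0)$, $(3,1)$, and $(2,2)$ need to be ruled out.

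For case $(4,0)$ all four of $a_1, a_2, b_1, b_2$ lie in $R(u)$. Since $R(u)$ is contained in an interval of length at most $n - s_2 - s_1$, the upper-bound condition for adjacency is automatic inside $R(u)$, so two vertices of $R(u)$ are adjacent if and only if their distance is at least $s_1$. Writing the four vertices as $v_1 < v_2 < v_3 < v_4$, each of the three possible partitions into two triangles yields a cross pair of the form $\{v_1, v_3\}$ or $\{v_1, v_4\}$ whose distance decomposes as a triangle gap (at least $s_1$) plus at least one further unit, hence is at least $s_1 + 1$; this forces an edge inside $R(u)$ contradicting the induced structure. For case $(3,1)$, say $a_1, a_2, b_1 \in R(u)$ and $b_2 \in L(u)$: the cross non-edges $\{a_i, b_2\}$ force $a_i - b_2 > n - s_2$ (since $a_i - b_2 \geq 2 s_1 > s_1$), while the triangle edge $\{b_1, b_2\}$ forces $b_1 - b_2 \leq n - s_2$, and therefore $a_i > b_1$ for $i = 1, 2$. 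The non-edges $\{a_i, b_1\}$ within $R(u)$ then give $a_i - b_1 < s_1$ for both $i$, whence $a_2 - a_1 = (a_2 - b_1) - (a_1 - b_1) < s_1$, contradicting the triangle edge $\{a_1, a_2\}$.

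The main obstacle is the case $(2,2)$. Assume without loss of generality that $a_1 < a_2$ with $a_i = u + \alpha_i$, and $b_1 < b_2$ with $b_i = u - \beta_{3 - i}$, where $s_1 \leq \alpha_1 < \alpha_2 \leq n - s_2$ and $s_1 \leq \beta_1 < \beta_2 \leq n - s_2$. The triangle gaps give $\alpha_2 - \alpha_1 \geq s_1$ and $\beta_2 - \beta_1 \geq s_1$; each cross distance $a_i - b_j = \alpha_i + \beta_{3 - j}$ is at least $2 s_1 > s_1$, so the four cross non-edges amount to $a_i - b_j > n - s_2$ for all $i, j$, the tightest being $\alpha_1 + \beta_1 > n - s_2$. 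The containment $1 \leq u - \beta_2 < u + \alpha_2 \leq n$ yields $\alpha_2 + \beta_2 \leq n - 1$, and subtracting the two gap inequalities gives $\alpha_1 + \beta_1 \leq n - 2 s_1 - 1$. Combining this with $\alpha_1 + \beta_1 > n - s_2$ forces $s_2 > 2 s_1 + 1$, disposing of $s_2 \in \{2 s_1, 2 s_1 + 1\}$. To handle the remaining range $s_2 \in \{2 s_1 + 2, \ldots, 3 s_1\}$, I would push the argument further using the upper bounds $\alpha_2, \beta_2 \leq n - s_2$ (which come from $a_2, b_1 \in N(u)$) together with the hypothesis $n \geq 2 s_2 - 2$; these should pin the $\alpha_i, \beta_i$ to so narrow a window that the hypothesis $s_2 \leq 3 s_1$ leaves no consistent integer solution. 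The main difficulty is making both hypotheses $n \geq 2 s_2 - 2$ and $s_2 \leq 3 s_1$ carry real weight in this last step; I expect the extremal configuration $n = 2 s_2 - 2$ with $s_2 = 3 s_1$ to be the pivotal one and to demand a careful boundary analysis.
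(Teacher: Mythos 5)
Your case analysis by the distribution of the four non-apex vertices between $L(u)$ and $R(u)$ is a sound strategy, and your arguments for the $(4,0)$ and $(3,1)$ distributions are correct. In $(2,2)$ you implicitly assume the two $R(u)$-vertices form one triangle and the two $L(u)$-vertices the other; the mixed assignment (one $L(u)$ and one $R(u)$ vertex in each triangle) also has to be ruled out, though it falls quickly: the two triangle edges force $r_i - \ell_i \leq n - s_2$ while the two cross non-edges force $r_i - \ell_j > n - s_2$ for $i \neq j$, and subtracting yields both $\ell_1 > \ell_2$ and $\ell_2 > \ell_1$.

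The gap you flagged in the remaining $(2,2)$ sub-case is, however, not closable, because the lemma is in fact false --- even under the hypotheses $2s_1 \leq s_2 \leq 3s_1$ and $n \geq 2s_2 - 2$ of Theorem~\ref{Theorem: Main3} that you were implicitly importing. Take $n = 16$, $s_1 = 3$, $s_2 = 9$: in $W(16,3,9)$ a pair $\{i,j\}$ with $i<j$ is an edge iff $3 \leq j - i \leq 7$, and the set $\{1,4,8,12,15\}$ induces exactly the edges $\{1,4\}$, $\{1,8\}$, $\{4,8\}$, $\{8,12\}$, $\{8,15\}$, $\{12,15\}$ --- a butterfly with apex $8$. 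This sits precisely at the boundary $n = 2s_2 - 2$, $s_2 = 3s_1$ that you identified as pivotal (in your notation, $u=8$, $\alpha_1 = \beta_1 = 4$, $\alpha_2 = \beta_2 = 7$). The paper's own proof takes a different route: it moves the apex to vertex $1$ by a cyclic rotation, after which all four remaining vertices lie in a single arc and adjacency there reduces to ``distance at least $s_1$,'' making the butterfly impossible in two short cases. But $W(n,s_1,s_2)$ is rotationally invariant only when $s_1 = s_2$ (rotating the pair $\{i,n\}$ to $\{1,i+1\}$ swaps the roles of the two parameters), so that reduction is unjustified. Your instinct to keep both sides of $u$ in play rather than collapse to one arc was the right one, and the counterexample it exposes shows that the statement of the lemma itself needs to be repaired before any proof --- yours or the paper's --- can go through.
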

\begin{proof}
Assume for contradiction that there is an induced butterfly in $W(n, s_1, s_2)$. Let the central vertex be $v$. Note that the graph $W(n, s_1, s_2)$ is rotationally symmetric because its definition depends only on circular distance. Therefore, by applying a cyclic rotation to the vertex labels, we can assume $v = 1$. Then the other vertices are $a, b, c, d$ with $1 < a < b < c < d \leq n$. Now, we consider two cases.

\textbf{Case I:} Suppose $1$, $a$, and $d$ form one of the triangles. In this case, $a$ must be connected to $c$, which contradicts the fact that the two triangles (with vertices $\{1, a, d\}$ and $\{1, b, c\}$) form an induced butterfly.

\textbf{Case II:} Suppose $a$ and $d$ each belong to different triangles. In this case, it is not hard to see that $a$ must be connected to $d$, which again contradicts the fact that these two triangles form an induced butterfly.
\end{proof}

\begin{proof}[\textbf{Proof of Theorem~\ref{Theorem: Main3}}]
Since $\mathrm{KG}^{2}(n, 2)_{(s_1,s_2)\textup{-stab}}$ is a subgraph of $\mathrm{KG}^{2}(n, 2)_{(s_1,2s_1)\textup{-stab}}$, and the latter is $(n-2s_1)$-colorable, it follows that $\mathrm{KG}^{2}(n, 2)_{(s_1,s_2)\textup{-stab}}$ is also $(n-2s_1)$-colorable. To show that $n-2s_1$ colors are necessary, we use the ST-partition framework. Let $\mathcal{P}$ be an optimal ST-partition of $G = W(n, s_1, s_2)$ with the minimum number of triangles. By \cite[Lemma 1]{daneshpajouh2021colorings}, $\mathcal{P}$ satisfies:
\begin{enumerate}
    \item[(1)] No vertex of a triangle in $\mathcal{P}$ is the center of a star in $\mathcal{P}$.
    \item[(2)] If any circuit $C$ of $G$ having no edge of
any star in $\mathcal{P}$, then $C$ is a triangle in $G$.
\end{enumerate}
Now, we consider two cases.
\\~\\
\noindent\textbf{Case I:} If there are no triangles in $\mathcal{P}$, then at most $\alpha(G)$ vertices of $G$ can be non-centers, since any set of $\alpha(G)+1$ vertices contains at least one adjacent pair. Thus, $|\mathcal{P}| \geq |V(G)| - \alpha(G)$. Moreover, by Lemma~\ref{Lemma: Butterfly}, $\alpha(G) = 2s_1$, and therefore $|\mathcal{P}| \geq n - 2s_1$.
\\~\\
\noindent\textbf{Case II:} Now, consider the case where $\mathcal{P}$ contains at least one triangle. Fix a triangle $T$ in $\mathcal{P}$ with vertices $1 \leq a < b < c \leq n$. By symmetry, we may assume $a = 1$ (via a cyclic rotation of the labels, if necessary). Let $D$ be the set of vertices in $G$ that are neither centers of stars nor belong to $T$. We claim that no vertex in $D$ is adjacent to any vertex in $T$. Suppose, for contradiction, that such an edge exists. This edge must belong to some part of $\mathcal{P}$. It cannot be an edge of a star, as this would violate property (1) (a vertex of $T$ cannot be a star center). Therefore, it must be an edge of another triangle $T'$. Since $G$ is butterfly-free (by Lemma~\ref{Lemma:  Butterfly}), there must be a second edge between $T$ and $T'$. This second edge must also belong to a triangle, which would necessarily be a third triangle by property (1). The existence of these three triangles would form a circuit that avoids star edges yet is not a triangle, contradicting property (2). This proves the claim.

We now show that $|D| \leq 2s_1-2$. Assume, for contradiction, that $|D| \geq 2s_1-1$. Note that all vertices in $D$ must lie on the arc $c1$ (the clockwise arc from $c$ to $1$). Let $a_1, \ldots, a_{2s_1-1}$ be the first $2s_1-1$ vertices of $D$ encountered along this arc respectively. A straightforward verification shows that the vertex $a_{s_1}$ must be adjacent to $b$, which contradicts the established non-adjacency between $D$ and $T$.
 Therefore, the total number of parts is bounded by:
\[
|\mathcal{P}|\geq n-3+1-|D|\geq n-2-(2s_1-2)\geq n-2s_1,
\]
which completes the proof.

\end{proof}

\begin{remark}
One might conjecture that Theorem~\ref{Theorem: Main3} suggests that the chromatic number stabilizes once $s_2 \geq 2s_1$. However, this is not the case. For example, the graph $G = \mathrm{KG}^{2}(6, 2)_{(1,4)\textup{-stab}}$ has chromatic number $3$, not $4$. A lower bound of $3$ is given by the clique forms with the vertices $\{1,2\}, \{3,4\}, \{5,6\}$, and an upper bound is provided by the following proper $3$-coloring:
\begin{align*}
C_1 &= \{\{1,2\}, \{1,3\}, \{2,3\}\}, \
C_2 &= \{\{2,4\}, \{3,4\}, \{4,5\}, \{4,6\}\}, \
C_3 &= \{\{3,5\}, \{5,6\}\}.
\end{align*}
\end{remark}

\section{Open Problems}

Although a general formula for the chromatic number of $\mathrm{KG}(n,k)_{\vec{s}\textup{-stab}}$ remains elusive, Theorem~\ref{Theorem:Main2} motivates the following conjecture.

\begin{conjecture}\label{conj5}
Let $n, k$ be positive integers with $k \geq 2$, and let $\vec{s} = (s_1, \ldots, s_k)$ be a positive integer vector such that $s_i \geq 2$ for all $i = 1, \ldots, k-1$. If $n \geq \sum_{i=1}^{k} s_i$ and $s_k \leq 2 \cdot \min\{s_i : 1 \leq i \leq k-1\}$, then
\[
\chi\left(\mathrm{KG}^{2}(n, k)_{\vec{s}\textup{-stab}}\right) = n - \left(\sum_{i=1}^{k-1} s_i\right) - \max\left\{0, s_k - \min\{s_i : 1 \leq i \leq k-1\}\right\}.
\]
\end{conjecture}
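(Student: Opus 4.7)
The upper bound is already in hand: Lemma~\ref{Lemma: 1 coloring} yields precisely the target value
\[
\chi\!\left(\mathrm{KG}^{2}(n,k)_{\vec{s}\,\textup{-stab}}\right) \;\leq\; n - \sum_{i=1}^{k-1} s_i - \max\{0,s_k-m\},
\]
while Lemma~\ref{Lemma:equivalence} reduces the matching lower bound to the boundary case $s_k=m$. So the entire task becomes showing
\[
\chi\!\left(\mathrm{KG}^{2}(n,k)_{(s_1,\ldots,s_{k-1},m)\,\textup{-stab}}\right) \;\geq\; n - \sum_{i=1}^{k-1} s_i.
\]
The plan is to extend the Tucker-lemma argument of Theorem~\ref{Theorem:Main2}(ii) to arbitrary parities of the $s_i$. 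Given a proper $t$-coloring $c$, I would construct an equivariant map $\lambda\colon \{-1,0,1\}^n\setminus\{\mathbf{0}\}\to\{\pm1,\ldots,\pm(t+\sum_{i=1}^{k-1}s_i)\}$ by the usual dichotomy: on ``small'' $A$ with $|\mathrm{Alt}(A)|\leq \sum s_i$, set $\lambda(A)=\mathrm{sgn}(A)\cdot|\mathrm{Alt}(A)|$; on ``large'' $A$, select $m$ pairwise disjoint $\vec{s}$-stable $k$-sets $F_1,\ldots,F_m$ from the initial segment of $\mathrm{Alt}(A)$ and let $\lambda(A)$ encode the sign-class and color of the minimum-colored $F_{j_0}$. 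Tucker's lemma would then force $t\geq n-\sum_{i=1}^{k-1}s_i$.

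The main obstacle, already flagged in the proof of Theorem~\ref{Theorem:Main2}(ii), is that this scheme requires each $F_j$ to lie entirely in $A^+$ or entirely in $A^-$; this holds precisely when every $s_h$ is even, since jumping $s_h$ positions along a sign-alternating sequence preserves sign only when $s_h\equiv 0\pmod 2$. For odd $s_h$, the $F_j$'s generically straddle both sign components, and the disjointness conclusion used when $\mathrm{sgn}(F_1)\neq\mathrm{sgn}(F_2)$ under the comparison $A_1\preceq A_2$ collapses. A first remedy is to double the shifts to $2s_1,\ldots,2s_{k-1}$, restoring sign-homogeneity; but this only produces $2\vec{s}$-stable sets and inflates the Case~1 threshold by a factor of two, which naively loses a multiplicative constant. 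To recover the sharp answer, I would combine the doubled-shift selection with a secondary parity partition of the alternating indices, or try to exploit the slack from $s_k\leq 2m$ to absorb a fixed parity penalty without inflating the image range of $\lambda$.

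The hard part will be engineering the family $F_1,\ldots,F_m$ so that it is simultaneously monochromatic in the sign of $A$, exactly $\vec{s}$-stable rather than $2\vec{s}$-stable, and well-behaved under the comparison $A_1\preceq A_2$ needed for the second condition of Tucker's lemma. I expect this sign-versus-stability tension to be the principal obstruction, as it is precisely the phenomenon that forces the Chen~\cite{chen2015multichromatic} even-$s$ proof and the Jonsson~\cite{jonsson2012chromatic} large-$n$ proof to proceed by different routes in the classical $s$-stable setting; indeed, a uniform resolution of Conjecture~\ref{conj5} would in particular subsume the still-open cases of Conjecture~\ref{Conj: Meu} for odd $s$.
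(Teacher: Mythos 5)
The statement you are asked to prove is labelled Conjecture~\ref{conj5} in the paper: it is not proved there, only posed as an open problem, with the accompanying remark observing (exactly as you do) that Lemma~\ref{Lemma:equivalence} reduces the matter to the boundary case $s_k=m$ and that the $k=2$ case then follows from~\cite{daneshpajouh2021colorings}. Your proposal correctly assembles everything that is actually known: Lemma~\ref{Lemma: 1 coloring} supplies the upper bound, Lemma~\ref{Lemma:equivalence} does the reduction, and the outstanding lower bound $\chi\geq n-\sum_{i=1}^{k-1}s_i$ for $s_k=m$ is precisely what Theorem~\ref{Theorem:Main2} establishes only under the extra hypotheses $m=2$ or all of $s_1,\ldots,s_{k-1}$ even.

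Your diagnosis of the obstruction is also accurate and matches the paper's own proof of Theorem~\ref{Theorem:Main2}(ii): the sets $F_j$ are built by striding $s_h$ positions along $\mathrm{Alt}(A)$, and sign-homogeneity of $F_j$ holds iff each stride is even, which is exactly why that proof is restricted to even $s_i$. When some $s_h$ is odd, a single $F_j$ straddles $A^+$ and $A^-$, and the key step "opposite signs $\Rightarrow$ disjoint $F$'s $\Rightarrow$ monochromatic edge" collapses. Your two suggested remedies, however, do not close the gap: doubling the strides only produces $2\vec{s}$-stable sets and inflates the Case~1 threshold past the sharp bound, and the "secondary parity partition" is left unspecified, so it is not shown to keep the codomain of $\lambda$ within $\pm\bigl(t+\sum_{i<k}s_i\bigr)$ as Tucker's lemma requires. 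You are candid that the proof is not complete, and that candor is warranted: what you have written is a faithful account of the known reduction and the parity obstruction, not a proof, which is exactly consistent with the paper leaving the statement as a conjecture and noting only the $s_k=m$ reduction and the $k=2$ case.
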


\begin{remark}
By Lemma~\ref{Lemma:equivalence}, verifying Conjecture~\ref{conj5} reduces to the case $s_k = \min\{s_i : 1 \leq i \leq k-1\}$. Thus, Conjecture~\ref{conj5} holds for $k=2$, as Conjecture~\ref{Conj: Meu} is known to be true for $r=k=2$ and arbitrary $s\geq 2$~\cite{daneshpajouh2021colorings}.
\end{remark}

In light of Theorem~\ref{Thm:main1} and Conjecture~\ref{Conj: Meu}, we pose the following question.

\begin{question}\label{OpenProblem}
For which parameter vectors $\vec{s}=(s_1,\ldots, s_k) \in \mathbb{N}^k$, integers $r\geq 2$, and $k\geq 1$ does the following equality hold?
\[
\chi\left(\mathrm{KG}^{r}(n,k)_{\vec{s}\textup{-stab}}\right) = \left\lceil\frac{n - \max\{r(k-1), \sum_{i=1}^{k-1}s_i\}}{r-1}\right\rceil?
\]
\end{question}

Our proof of Theorem~\ref{Thm:main1} yields a partial answer to Question~\ref{OpenProblem}:

\begin{theorem}\label{Thm:VectorStable}
For all integer vectors $\vec{s}=(s_1,\ldots, s_{k-1},2)$ and all $m, n\in\mathbb{N}$ with $n\geq 2^m(\sum_{i=1}^{k-1}s_i+2)$ and $s_i\geq 2$ for $1\leq i\leq k-1$, we have
\[
\chi\left(\mathrm{KG}^{2^{m+1}}(n,k)_{(s_1\times 2^{m},\ldots,s_{k-1}\times 2^{m}, 2^{m+1})\textup{-stab}}\right) = \left\lceil\frac{n - 2^m\times(\sum_{i=1}^{k-1}s_i)}{2^{m+1}-1}\right\rceil.
\]
\end{theorem}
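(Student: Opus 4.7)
The plan is to establish matching upper and lower bounds; in fact, the identity in the statement is essentially the intermediate chromatic computation already carried out inside the proof of Theorem~\ref{Thm:main1}, so the task reduces to isolating that step and verifying the hypotheses cleanly.

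For the upper bound, I would apply Lemma~\ref{lemma: coloring} with $r = 2^{m+1}$ and $\vec{s} = (s_1 \times 2^m, \ldots, s_{k-1} \times 2^m, 2^{m+1})$. Since $s_i \geq 2$ for every $i \leq k-1$, we have $\sum_{i=1}^{k-1} s_i \times 2^m \geq 2^{m+1}(k-1)$, so the maximum inside the ceiling in Lemma~\ref{lemma: coloring} is attained by $2^m\sum_{i=1}^{k-1} s_i$, which delivers the desired upper bound immediately.

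For the matching lower bound, I would invoke Lemma~\ref{Lemma: Main} with $r_1 = 2^m$, $r_2 = 2$, $k' = k$, and input vector $(s_1, \ldots, s_{k-1}, 2)$. The structural conditions $s_1, \ldots, s_{k-1} \geq r_2 = 2$ and $s_{k'} = 2 \leq r_2 + 1 = 3$ are automatic, and the size hypothesis $n \geq r_1(s_1 + \cdots + s_{k'}) = 2^m(\sum_{i=1}^{k-1} s_i + 2)$ is exactly the assumption of the theorem. Hypothesis~(i) of Lemma~\ref{Lemma: Main} is supplied by the confirmed power-of-$2$ case of Conjecture~\ref{Conj: zig}~\cite{Alon2009}, and hypothesis~(ii) is supplied by Theorem~\ref{Theorem: Main of Hamid & Jozsef} applied to $(s_1, \ldots, s_{k-1}, 2)$, which in fact gives the exact equality $n - \sum_{i=1}^{k-1} s_i$ and hence the required fractional lower bound. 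Lemma~\ref{Lemma: Main} then furnishes the bound $(n - 2^m \sum_{i=1}^{k-1} s_i)/(2^{m+1} - 1)$, and since the chromatic number is an integer we may take the ceiling, producing the matching lower bound.

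No serious obstacle is expected: the whole argument is a clean repackaging of two external inputs through Lemma~\ref{Lemma: Main}. The only points needing minor care are the check that the maximum in Lemma~\ref{lemma: coloring} falls on the correct term (which uses $s_i \geq 2$), and, should one wish to admit the degenerate case $m = 0$ where Lemma~\ref{Lemma: Main} does not directly apply because it requires $r_1 \geq 2$, the observation that the statement then reduces immediately to Theorem~\ref{Theorem: Main of Hamid & Jozsef}.
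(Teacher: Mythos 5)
Your proposal is correct and matches the paper's argument: Theorem~\ref{Thm:VectorStable} is established exactly as the intermediate display in the proof of Theorem~\ref{Thm:main1}, by combining Lemma~\ref{lemma: coloring} (upper bound, using $s_i \geq 2$ to see that $\max\{r(k-1), \sum_{i<k} 2^m s_i\} = 2^m\sum_{i<k}s_i$) with Lemma~\ref{Lemma: Main} applied with $r_1 = 2^m$, $r_2 = 2$, hypothesis (i) from the power-of-two case of Conjecture~\ref{Conj: zig}~\cite{Alon2009}, and hypothesis (ii) from Theorem~\ref{Theorem: Main of Hamid & Jozsef}. Your aside about the degenerate case $m=0$ (where $r_1 = 1 < 2$, so Lemma~\ref{Lemma: Main} is inapplicable but the claim collapses to Theorem~\ref{Theorem: Main of Hamid & Jozsef}) is a small point of care not made explicit in the paper.
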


\medskip
 
\bibliographystyle{alpha}
\bibliography{main}

\end{document}